\documentclass[british]{scrartcl}

\title {Segal \texorpdfstring{\emph{K}}{K}-theory of vector spaces\\
with an automorphism}
\author{Andrea Bianchi\and Florian Kranhold}

\KOMAoptions{headings=standardclasses,numbers=enddot}

\usepackage[utf8]{inputenc}
\usepackage[T1]{fontenc}

\usepackage{babel}
\usepackage{csquotes}
\usepackage[cleanlook]{isodate}

\usepackage{lmodern}                                   %
\usepackage[osf,sc]{mathpazo}                          %
\newcommand{\LF}[1]{{\fontfamily{pplx}\selectfont #1}} %
\usepackage{myT1classico}                              %

\usepackage{setspace}
\setstretch{1.07}

\usepackage{ellipsis} %
\usepackage[babel,tracking=true]{microtype}
\makeatletter
\@ifpackagelater{microtype}{2020/12/08}
  {\microtypesetup{nopatch=footnote}}
  {}
\makeatother
\UseMicrotypeSet[tracking]{smallcaps}
\SetTracking{encoding=*,shape=sc}{30}

\makeatletter
  \widowpenalty=\@M
\makeatother

\relpenalty=10000
\binoppenalty=10000

\everypar=\expandafter{\the\everypar\looseness=-1}

\raggedbottom

\usepackage{relsize}
\newcommand{\acr}[1]{\texorpdfstring{\textsmaller{\LF{#1}}}{#1}}

\usepackage[shortlabels]{enumitem}

\usepackage{xcolor}

\IfFileExists{preamble-math.tex}  {%
\usepackage{amssymb,amsmath} %
\usepackage{mathtools}       %
\usepackage{stmaryrd}        %

\usepackage{tikz-cd}

\let \ge       \geqslant

\let \setminus \smallsetminus

\let \epsilon  \varepsilon

\newcommand{\on} [1]{\mathrm{#1}}

\let\originalleft\left
\let\originalright\right
\renewcommand{\left}{\mathopen{}\mathclose\bgroup\originalleft}
\renewcommand{\right}{\aftergroup\egroup\originalright}

\newcommand{\ula}[1]{\langle #1\rangle}                                         %
\newcommand{\llb}[1]{\mathopen{}\left\llbracket #1\right\rrbracket\mathclose{}} %

\newcommand{\pa}[1]{(#1)}

\newcommand{\GL} {\on{GL}}

\newcommand{\op} {{\on{op}}}

\newcommand{\Id} {\on{id}}   %

\newcommand{\Mod}{\mathbf{Mod}}

\renewcommand{\Im}{\mathrm{Im}}

\newcommand{\N}{\mathbb{N}}
\newcommand{\Z}{\mathbb{Z}}

\newcommand{\R}{\mathbb{R}}
\newcommand{\C}{\mathbb{C}}

\DeclareFontFamily{OMS}{zplm}{
   \skewchar\font=48 %
   \fontdimen 9\font=0.34\fontdimen6\font
   \fontdimen 8\font=0.84\fontdimen6\font
}
\DeclareFontShape{OMS}{zplm}{m}{n}{<-> zplmr7y}{}
\DeclareFontShape{OMS}{zplm}{b}{n}{<-> zplmb7y}{}
\DeclareFontShape{OMS}{zplm}{bx}{n}{<->ssub * zplm/b/n}{}

\newcommand{\bA}{\mathbb{A}}
\newcommand{\bF}{\mathbb{F}}

\newcommand{\bL}{\mathbb{L}}

\newcommand{\caS}{\mathbf{S}}
\newcommand{\bfA}{\mathbf{A}}
\newcommand{\bfC}{\mathbf{C}}

\newcommand{\bfE}{\mathbf{E}}

\newcommand{\bfI}{\mathbf{I}}

\newcommand{\bfN}{\mathbf{N}}

\newcommand{\frm}{\mathfrak{m}}

\newcommand{\tF}{\tilde{F}}

\newcommand{\B}{\mathrm{B}}
\newcommand{\Map}{\mathrm{Map}}

\newcommand{\aut}{{\mathrm{aut}}}

\newcommand{\nil}{\mathrm{nil}}
\renewcommand{\top}{\mathrm{top}}

\newcommand{\tors}{\mathrm{tors}}
\newcommand{\Cat}{\mathbf{Cat}}
\newcommand{\CMon}{\mathbf{CMon}}
\newcommand{\Sp}{\mathbf{Sp}}
\newcommand{\hypo}{\mathrm{hypo}}
\newcommand{\BU}{\mathrm{BU}}
\newcommand{\BO}{\mathrm{BO}}
\newcommand{\mO}{\mathrm{O}}
\newcommand{\gl}{\mathrm{gp}}
\newcommand{\Tel}{\mathrm{Tel}}
\newcommand{\cn}{\mathrm{cn}}

\newcommand{\tto}{\mathbin{\mbox{$\to\hspace*{-3.4mm}\to$}}}

\makeatletter
\newcommand{\oset}[3][0ex]{%
  \mathrel{\mathop{#3}\limits^{
    \vbox to#1{\kern-2\ex@
    \hbox{$\scriptstyle#2$}\vss}}}}
\makeatother

\newcommand{\QK}{K}
\newcommand{\NF}{\bfN_{\smash\bF}}
}  {}
\IfFileExists{preamble-bib.tex}   {\usepackage[backend      = biber,
            style        = alphabetic,
            maxbibnames  = 9,
            maxcitenames = 9,
            bibencoding  = utf8,
            datamodel    = ext-eprint]{biblatex}

\DefineBibliographyExtras{british}{%
  
}

\DeclareFieldFormat{doi}{\textsf{\href{https://doi.org/#1}{[DOI]}}}
\DeclareFieldFormat[misc]{title}{‘#1’}  %

\makeatletter
\DeclareFieldFormat{eprint:arxiv}{%
  arXiv\addcolon\space
  \ifhyperref
    {\href{http://arxiv.org/\abx@arxivpath/#1}{%
       \nolinkurl{#1}%
       \iffieldundef{eprintclass}
     {}
     {\addspace\textsf{\mkbibbrackets{\thefield{eprintclass}}}}}}
    {\nolinkurl{#1}
     \iffieldundef{eprintclass}
       {}
       {\addspace\textsf{\mkbibbrackets{\thefield{eprintclass}}}}}}%
\makeatother

\DefineBibliographyExtras{british}{}
\DefineBibliographyExtras{ngerman}{}

\AtBeginBibliography{\footnotesize}

\bibliography{bibliography.bib}
}   {}
\IfFileExists{preamble-music.tex} {\input{preamble-music.tex}} {}
\IfFileExists{preamble-custom.tex}{\input{preamble-custom.tex}}{}

\usepackage{footmisc}

\setlength{\footnotemargin}{1em}

\addtokomafont{pagehead}{\fontfamily{ppl}\selectfont}

\usepackage{etoolbox}
\ifundef{\abstract}{}{
  \patchcmd{\abstract}{\quotation}{\quotation\noindent\ignorespaces}{}{}}

\usepackage{scrlayer-scrpage}
\pagestyle{scrheadings}

\usepackage[pdfusetitle,%
            bookmarksnumbered=true,%
            unicode,%
            hidelinks]{hyperref}

\let\C\COld

\let\mailfont\textsf
\urlstyle{sf}

\newcommand{\mail}[1]{\upshape\href{mailto:#1}{\mailfont{#1}}}

\hypersetup{
  colorlinks,
  linkcolor={black!45!red},   %
  citecolor={black!55!green}, %
  urlcolor ={black!45!blue}   %
}

\IfFileExists{preamble-env.tex}{%
\makeatletter
  \@ifpackageloaded{amsthm}{}{\usepackage{amsthm}}
\makeatother

\usepackage[noabbrev]{cleveref}

\theoremstyle{plain}
\newtheorem{theo}   {Theorem}[section]

\newtheorem{lem}    [theo]{Lemma}

\newtheorem{cor}    [theo]{Corollary}

\newtheorem{atheo}  {Theorem}

\theoremstyle{definition}
\newtheorem{defi}   [theo]{Definition}

\newtheorem{expl}   [theo]{Example}
\newtheorem{nota}   [theo]{Notation}
\newtheorem{constr} [theo]{Construction}
\newtheorem{rmk}    [theo]{Remark}
\newtheorem{rmd}    [theo]{Reminder}

\Crefname{atheo}{Theorem}{Theorems}

\newlength{\intendedDistance}

}{}

\newcommand{\addr}[2]{%
  \paragraph{#1}
  \LF{#2}\vspace*{-.25\baselineskip}
} 

\KOMAoptions{DIV=9}

\lohead{A.\ Bianchi, F.\ Kranhold}
\rohead{Segal K-theory of vector spaces with an automorphism}

\begin{document}

\maketitle

\begin{abstract}
  We describe the Segal $K$-theory of the symmetric monoidal category of
  finite-dimensional vector spaces over a field $\bF$ together with an
  automorphism, or, equivalently, the group-completion of the $E_\infty$-algebra
  of maps from $S^1$ to the disjoint union of classifying spaces $\B\GL_d(\bF)$,
  in terms of the $K$-theory of finite field extensions of $\bF$. A key
  ingredient for this is a computation of the Segal $K$-theory of the category
  of finite-dimensional vector spaces with a nilpotent endomorphism. We also
  discuss the topological cases of $\bF=\C,\R$.\looseness-1
\end{abstract}

\section{Introduction and overview}
\label{sec:intro}
\subsection{Setting and question}
If $(\bfC,\otimes)$ is a symmetric monoidal category, the core groupoid
$\bfC^\simeq$ is canonically an $E_\infty$-algebra in spaces, whose
group-completion is the infinite loop space of a connective spectrum
$K^S(\bfC)$, known as the \emph{Segal $K$-theory of $\bfC$}
\cite{Segal,May78,Thomason79}.\looseness-1

We are interested in the relation between the Segal $K$-theories of $\bfC$ and
of the functor category $\bfC^\aut=\Map(\B\Z,\bfC)$ of objects in $\bfC$ with
automorphism, together with the pointwise symmetric monoidal structure.  Note
that $\bfC$ is a symmetric monoidal retract of $\bfC^\aut$, comprising objects
of $\bfC$ together with the identity morphism. As a consequence, the connective
spectrum $K^S(\bfC^\aut)$ contains $K^S(\bfC)$ as a direct summand, and we are
interested in determining the remaining summand. Moreover, the
$E_\infty$-algebra $\bfC^{\aut,\simeq}$ is equivalent to the free loop space
$\Map(S^1,\bfC^{\simeq})$, see \cref{lem:mapS1}, so our question can also be
phrased in terms of the group-completion of the latter.\looseness-1

When $\bfC$ is an abelian category with the direct sum monoidal structure,
$K^S(\bfC)$ agrees with the connective cover of the spectrum $K^\oplus(\bfC)$,
which we shall refer to as \emph{additive $K$-theory} of $\bfC$, and which is
defined as the (not necessarily connective) Quillen $K$-theory of $\bfC$ endowed
with the split exact structure.  We mention that, more generally, an additive
$K$-theory spectrum $K^\oplus(\bfC)$ can be defined for any small additive
$\infty$-category $\bfC$ by first taking the stabilisation of $\bfC$, which is a
small stable $\infty$-category, and by then taking $K$-theory of the latter in
the sense of \cite{BGT2}, see \cite[Appendix A]{ElmantoSosnilo},
\cite[Subsection 3.3]{LMMT}. The connective cover of the latter is equivalent to
$K^S(\bfC)$ \cite[Corollary 8.1.3]{HebestreitSteimle}.
  
On the other hand, each abelian category $\bfC$ can be endowed with its
\emph{maximal} exact structure; we denote its associated Quillen's $K$-theory by
$\QK(\bfC)$ and simply call it \emph{Quillen's $K$-theory} of $\bfC$ in the rest
of the article. By the theorem of the heart \cite{Barwick}, this agrees with the
$K$-theory of the bounded derived category of $\bfC$, which is a small, stable
$\infty$-category.

If $\bfC$ is abelian, then the aforementioned category $\bfC^\aut$ is again
abelian. In this work, we study $K^S(\bfC^\aut)$ and $K^\oplus(\bfC^\aut)$ in
the case where $\bfC=\Mod_\bF$ is the abelian category of finite-dimensional
vector spaces over a field $\bF$.\looseness-1

\subsection{Results}

We will see in \cref{cor:connective} that $K^\oplus(\Mod_\bF^\aut)$ is
connective, and hence agrees with Segal’s $K$-theory $K^S(\Mod_\bF^\aut)$. Hence
we do not have to distinguish between $K^S$ and $K^\oplus$ when stating our
results in the introduction.\looseness-1

As mentioned above, the $E_\infty$-algebra $\Mod_\bF^{\aut,\simeq}$ is
equivalent to the $E_\infty$-algebra of maps from $S^1$ to
$\Mod_\bF^{\simeq}\simeq \coprod_{d\ge 0}\B\GL_d(\bF)$. Using this description,
the main result of this article can be phrased as follows in terms of Segal
$K$-theory: For each field $\bF$, there is an equivalence of spectra
\begin{align}\label{eq:Main}%
  \B^\infty \Map\left(S^1,\coprod_{d\ge 0}\B\GL_d(\bF)\right)
  \simeq \bigoplus_{\substack{\pa{t}\ne \frm\subset
  \bF[t]\\\mathrm{maximal~ideal}}}\bigoplus_{i=1}^\infty
  K(\bF[t]/\frm).
\end{align}
In other words, the group completion of the $E_\infty$-algebra
$\Map\left(S^1,\coprod_{d\ge 0}\B\GL_d(\bF)\right)$ can be identified with the
infinite loop space of the right hand side.

Here $\bF[t]$ is a polynomial algebra in one variable, $\pa{t}$ is the ideal
generated by the monomial $t$, and $\bF[t]/\frm$ is the residue field of $\frm$.
Moreover, $K(\bF)$ is the usual algebraic $K$-theory spectrum of $\bF$, which
agrees with the formely introduced spectra
$\QK(\Mod_\bF)\simeq K^\oplus(\Mod_\bF)\simeq K^S(\Mod_\bF)$. Its homotopy type
has been determined in many cases, e.g.\ \cite{QuillenFin,Borel}, see also
\cite[§\,\textsc{vi}]{Weibel} for an overview.\looseness-1

The equivalence in \cref{eq:Main} is conceived in two main steps: In the first
step, carried out in \cref{sec:eigen}, we employ the fact that each automorphism
of a finite-dimensional vector space has a primary decomposition to get the
following result:\looseness-1

\begin{atheo}\label{thm:A}
  Let $\bF$ be a field. Then we have an equivalence of spectra
  \[K^\oplus(\Mod_\bF^\aut)\simeq\bigoplus_{\substack{\pa{t}\ne \frm\subset
        \bF[t]\\\mathrm{maximal~ideal}}} K^\oplus(\Mod^\nil_{\bF[t]/\frm}).\]
\end{atheo}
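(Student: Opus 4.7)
The plan is to prove Theorem~A by showing that $\Mod_\bF^\aut$ decomposes as a direct sum of symmetric monoidal categories, indexed by the maximal ideals $\frm \ne (t)$ of $\bF[t]$, with each summand equivalent to $\Mod^\nil_{\bF[t]/\frm}$; once this is in place, applying Segal $K$-theory gives the claim, since $K^\oplus$ sends direct sums of symmetric monoidal categories to direct sums of connective spectra.

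First, I would identify $\Mod_\bF^\aut$ with the category of finitely-generated $\bF[t,t^{-1}]$-modules, in which the automorphism corresponds to multiplication by $t$. Since $\bF[t,t^{-1}]$ is a principal ideal domain, every such module admits a canonical primary decomposition
\[(V, T) \;\cong\; \bigoplus_{\frm} V_{(\frm)},\]
where $\frm$ ranges over the maximal ideals of $\bF[t]$ different from $(t)$ and $V_{(\frm)}$ is the subspace annihilated by some power of $\frm$. Since morphisms of $\bF[t]$-modules between objects supported at distinct maximal ideals vanish, this upgrades to a symmetric monoidal equivalence
\[\Mod_\bF^\aut \;\simeq\; \bigoplus_{\frm} \Mod_\bF^{\aut,(\frm)},\]
the right-hand side being the direct sum of symmetric monoidal categories (objects are tuples $(V_\frm)$ with finitely many nonzero entries, and the monoidal structure is componentwise), and $\Mod_\bF^{\aut,(\frm)}$ the full subcategory of $\frm$-primary modules.

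The main substantive step, which I expect to be the principal obstacle, is to establish a symmetric monoidal equivalence
\[\Mod_\bF^{\aut,(\frm)} \;\simeq\; \Mod^\nil_{\bF[t]/\frm}\]
for every such $\frm$, and this is where the perfectness hypothesis enters. Writing $k:=\bF[t]/\frm$, this residue field is a finite separable extension of $\bF$, and separability furnishes, by Hensel's lemma, a section $k \hookrightarrow \bF[t]/\frm^n$ of the canonical projection for each $n\ge 1$. Such a section extends to an isomorphism of $\bF$-algebras $\bF[t]/\frm^n \cong k[u]/(u^n)$, where $u$ is the image of a uniformizer of $\bF[t]_\frm$. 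Modules over $k[u]/(u^n)$ are exactly pairs of a $k$-vector space and a nilpotent endomorphism of order at most $n$; taking the union over $n$ yields the required identification, and compatibility with direct sums is routine. Without perfectness this step genuinely breaks: for instance, if $\bF = \bF_p(s)$ and $\frm = (t^p-s)$, then $\bF[t]/\frm^n$ is not isomorphic to $k[u]/(u^n)$ for $n \ge 2$.

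Finally, Segal $K$-theory carries the direct sum of symmetric monoidal categories constructed above into the direct sum of connective spectra, by additivity of the delooping machine applied to the componentwise $E_\infty$-structure. Combining this additivity with the two equivalences above gives the statement of Theorem~A.
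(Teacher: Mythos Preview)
Your proposal is correct and follows the same three-step architecture as the paper: primary decomposition of $\Mod_\bF^\aut$ into $\frm$-primary pieces, identification of each piece with $\Mod^\nil_{\bF[t]/\frm}$, and additivity of $K^\oplus$ over direct sums of symmetric monoidal categories.

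The only substantive difference lies in how the equivalence $\Mod_{\bF[t]}^\frm \simeq \Mod^\nil_{\bF[t]/\frm}$ is established. You invoke Hensel's lemma (equivalently, the Cohen structure theorem in equal characteristic) to produce a coefficient-field section $k \hookrightarrow \bF[t]/\frm^n$ and hence an $\bF$-algebra isomorphism $\bF[t]/\frm^n \cong k[u]/(u^n)$, from which the module-level equivalence is immediate. The paper instead fixes a root $\alpha$ of a generator $p$ of $\frm$, sets $\bL=\bF(\alpha)$, and builds an explicit adjunction between $\Mod_{\bF[t]}^\frm$ and $\Mod_{\bL[t]}^{(t-\alpha)}$ via extension/restriction of scalars along $\bF[t]\hookrightarrow\bL[t]$ composed with the $(t-\alpha)$-primary projection, then checks by hand that unit and counit are isomorphisms on the generators $\bF[t]/p^r$ and $\bL[t]/(t-\alpha)^r$. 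Your route is more conceptual and leans on a standard commutative-algebra black box; the paper's is more self-contained and makes the role of separability (namely that $(t-\alpha)^2\nmid p$) visible at the level of an explicit dimension count. Both are valid, and your counterexample in the imperfect case is apt.
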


Here $\Mod^\nil_{\bF}$ is the abelian category of finite-dimensional vector
spaces together with a nilpotent endo\-morphism.  By admitting the formerly
excluded maximal ideal $(t)$, the equivalence of \cref{thm:A} can be extended
to\looseness-1
\[K^\oplus(\Mod_\bF^{\text{end}}) \simeq
  \bigoplus_{\substack{\frm\subset\bF[t]\\\text{maximal ideal}}}
  K^\oplus(\Mod^\nil_{\bF[t]/\frm}),\] where
$\Mod_\bF^{\text{end}}=\Map(\B\N,\Mod_\bF)$ is the abelian category of vector
spaces over $\bF$ together with an endomorphism, see \cref{rmk:Endos} for
details.\looseness-1

Under the equivalence of \cref{thm:A}, the summand $K^\oplus(\Mod_\bF)$ is a
retract of the summand
$K^\oplus(\Mod^\nil_{\smash{\bF[t]/\pa{t-1}}})\simeq K^\oplus(\Mod^\nil_{\bF})$;
this retract is obtained by combining the functors
$\smash{\Mod_\bF\to\Mod_{\smash\bF}^\nil}$, endowing a given vector space with
the zero endomorphism, and $\smash{\Mod_\bF^\nil\to \Mod_\bF}$, forgetting the
endomorphism.\looseness-1

In the second step, carried out in \cref{sec:nil}, we study the additive
$K$-theory of $\Mod^\nil_{\bF}$. Although the forgetful functor
$\Mod^\nil_\bF\to\Mod_\bF$ induces an equivalence on Quillen $K$-theory, the
same is far from being true for additive $K$-theory.  We instead show the
following:

\begin{atheo}\label{thm:B}
  Let $\bF$ be a field. Then we have an equivalence of spectra
  \[K^\oplus(\Mod_\bF^\nil)\simeq\bigoplus_{i=1}^\infty K(\bF).\]
\end{atheo}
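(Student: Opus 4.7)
The plan is to construct a symmetric monoidal functor $G\lon \bigoplus_{i\ge 1}\Mod_\bF \to \Mod_\bF^\nil$ and show that it induces the claimed equivalence on $K^\oplus$. Here the source is the coproduct in symmetric monoidal categories (objects are tuples $(V_i)_{i\ge 1}$ of finite-dimensional vector spaces, with only finitely many nonzero entries), and $G$ sends $(V_i)_i$ to $\bigoplus_i V_i\otimes_\bF J_i$, where $J_i$ denotes the $i$-dimensional Jordan block with eigenvalue $0$. By the Jordan normal form, $G$ is essentially surjective on isomorphism classes. To produce a retraction I would introduce ``multiplicity'' functors $\phi_i\lon \Mod_\bF^\nil\to\Mod_\bF$ satisfying $\phi_i(J_j)\cong\bF^{\delta_{ij}}$, for instance
\[\phi_i(V,f) := (\ker f\cap f^{i-1}V)/(\ker f\cap f^iV),\]
which is readily checked to be additive and functorial. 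These assemble into $F\lon \Mod_\bF^\nil\to\bigoplus_i\Mod_\bF$ with $F\circ G\simeq \Id$, so $K^\oplus(G)$ is split injective.

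For the other direction I would argue by induction on the nilpotency degree. Writing $\mathcal C_k := \Mod_\bF^{\nil,\le k}$ for the full sub-symmetric-monoidal category of pairs $(V,f)$ with $f^k = 0$, we have $\Mod_\bF^\nil = \mathrm{colim}_k\mathcal C_k$; since Segal $K$-theory commutes with filtered colimits of symmetric monoidal categories, it suffices to establish compatibly in $k$ an equivalence $K^\oplus(\mathcal C_k)\simeq\bigoplus_{i=1}^k K^\oplus(\Mod_\bF)$. The base case $k = 1$ is $\mathcal C_1 = \Mod_\bF$ itself. The inductive step amounts to producing a splitting
\[K^\oplus(\mathcal C_k)\simeq K^\oplus(\mathcal C_{k-1})\oplus K^\oplus(\Mod_\bF),\]
where the first summand is included via $\mathcal C_{k-1}\hookrightarrow \mathcal C_k$ and the second via $V\mapsto V\otimes J_k$.

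The main obstacle is identifying this second summand with $K^\oplus(\Mod_\bF)$ rather than with the a priori larger $K(\bF[t]/(t^k))$: inside $\mathcal C_k$, the full subcategory on objects isomorphic to $V\otimes J_k$ has automorphism groups $\GL_n(\bF[t]/(t^k))$, and its own Segal $K$-theory is all of $K(\bF[t]/(t^k))$, which strictly exceeds $K(\bF)$ by the unipotent contribution from $1+(t)\subset (\bF[t]/(t^k))^\times$. For the splitting to have the announced form this excess has to be absorbed into $K^\oplus(\mathcal C_{k-1})$. The mechanism is that a unipotent automorphism $1+at\in \Aut(J_k)$ can be realized as a commutator of off-diagonal morphisms $J_k\rightleftarrows J_j$ for some $j<k$ inside $\Aut(J_k\oplus J_j)$, and hence vanishes after abelianization. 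The hardest part of the proof, and the part I expect to be the technical heart, is to upgrade this $K_1$-level observation to a full equivalence of connective spectra — presumably via a spectrum-level Whitehead-type cancellation result trivialising the off-diagonal and unipotent contributions to $\Aut(\bigoplus V_i\otimes J_i)$ after group completion in the presence of sufficiently many $\mathcal C_{k-1}$-type summands, or equivalently, via a careful analysis of the associated Segal $\Gamma$-space.
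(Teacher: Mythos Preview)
Your split-injectivity argument is sound: the functors $G$ and $\phi_i$ are indeed additive (hence symmetric monoidal for $\oplus$), and $F\circ G\simeq\Id$ gives a retraction on $K^\oplus$. The gap is entirely in the other direction. You correctly locate the obstacle --- the automorphism group of $\bF^n\otimes J_k$ inside $\mathcal C_k$ is $\GL_n(\bF[t]/(t^k))$, not $\GL_n(\bF)$ --- but you do not close it. The commutator observation is a $\pi_1$ statement at best, and there is no off-the-shelf ``spectrum-level Whitehead cancellation'' that promotes it; your final paragraph is a description of what would need to be true, not a proof. As written, the inductive step $K^\oplus(\mathcal C_k)\simeq K^\oplus(\mathcal C_{k-1})\oplus K^\oplus(\Mod_\bF)$ is unproven, and I do not see a direct route to it staying purely inside Segal $K$-theory: the symmetric monoidal category $\mathcal C_k$ has no localisation or d\'evissage theorems available, and the ``excess'' you want to absorb is precisely the part of $K(\bF[t]/(t^k))$ that is hard to control (e.g.\ for $\bF$ finite, $K_*(\bF[t]/(t^k))$ contains large $p$-torsion that does not visibly cancel by elementary manipulations).

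The paper sidesteps this entirely by transferring the problem to Quillen $K$-theory, where d\'evissage is available. Concretely: for any abelian category $\bfA$ one has Sherman's fibre sequence $K^Q(\bfA^{\#})\to K^\oplus(\bfA)\to K^Q(\bfA)$, with $\bfA^{\#}$ a certain abelian category of finitely presented functors $\bfA^{\op}\to\mathbf{Ab}$. Since $K^Q(\Mod_\bF^\nil)\simeq K^Q(\Mod_\bF)$ by d\'evissage and the comparison map is split, one gets $K^\oplus(\Mod_\bF^\nil)\simeq K^\oplus(\Mod_\bF)\oplus K^Q(\bfF)$ where $\bfF=(\Mod_\bF^\nil)^{\#}$. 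The paper then finds, for each $r\ge1$, an object $F_r\in\bfF$ with $F_r(J_s)\cong\bF^{\delta_{rs}}$, shows that the full subcategory $\bfF'$ on direct sums of $F_r$'s is equivalent to $\bigoplus_{r\ge1}\Mod_\bF$, and proves $\bfF'$ filters $\bfF$ so that a second application of d\'evissage gives $K^Q(\bfF)\simeq\bigoplus_{r\ge1}K^\oplus(\Mod_\bF)$. The unipotent contributions you are worried about never appear: they are already invisible in $K^Q(\Mod_\bF^\nil)$, and Sherman's fibre term is computed in a different abelian category where the simple objects are genuinely one-dimensional over $\bF$.
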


Roughly speaking, the index set corresponds to the fact that nilpotent
endomorphisms can be decomposed into Jordan blocks of arbitrary size. Our proof
of \cref{thm:B} relies on an explicit description of the fibre of the comparison
map between $K^\oplus(\Mod^\nil_\bF)$ and $\QK(\Mod^\nil_\bF)$ in terms of
Quillen $K$-theory of a certain auxiliary abelian category; this description is
due to Auslander and Sherman \cite{Sherman} in the connective setting, and to
Schlichting \cite{Schlichting} in the non-connective setting used in this
work. The main equivalence (\ref{eq:Main}) then follows by combining
\cref{thm:A,thm:B}.\looseness-1

\begin{rmk}
  \label{rmk:thmAisKFlinear}
  If the ground field $\bF$ is assumed to be perfect, then the equivalences in
  \cref{thm:A,thm:B} are in fact equivalences of $K(\bF)$-module spectra: this
  follows at once by observing that all functors that are used for establishing
  the equivalences are $\bF$-linear and hence tensored over $\Mod_\bF$.  See
  \cref{rmk:nonperfect} for details.\looseness-1
\end{rmk}

\begin{rmk}
  The symmetric monoidal structure on $\Mod_\bF^\aut$ given by tensor product
  over $\bF$ gives rise to an $E_\infty$-ring structure on
  $K^\oplus(\Mod_\bF^\aut)$.  Given two maximal ideals
  $\frm,\frm'\in\mathrm{Spec}(\bF[t^{\pm1}])=\mathbb{G}_m(\bF)$, it is easy to
  see that the product of the two direct summands of $K^\oplus(\Mod_\bF^\aut)$
  corresponding to $\frm,\frm'$ lands inside the (finitely many) summands
  corresponding to maximal ideals in the image of the finite scheme
  \[\mathrm{Spec}(\bF[t^{\pm1}]/\frm\otimes_{\bF}\bF[t^{\pm1}]/\frm')\subset
    \mathbb{G}_m(\bF)\times_{\bF}\mathbb{G}_m(\bF)\]%
  along the multiplication map of $\mathbb{G}_m(\bF)$. It would be interesting
  to derive a complete description of the $E_\infty$-ring product in the light
  of the joint decomposition of the spectrum $K^\oplus(\Mod_\bF^\aut)$ given by
  \cref{thm:A,thm:B}.
\end{rmk}

In the final \cref{sec:top}, we discuss the topological cases of $\bF=\C$ and
$\bF=\R$, where the natural topology on the automorphism groups is taken into
account. More precisely, let $\Mod_{\C,\top}^\simeq$ and $\Mod_{\R,\top}^\simeq$
be the topologically enriched symmetric monoidal groupoids of finite-dimensional
vector spaces over $\C$ and $\R$, respectively, with isomorphisms as morphisms,
i.e.\ we have the two homotopy equivalences
\[\Mod_{\C,\top}^\simeq \simeq \coprod_{d\ge 0}\BU(d),\qquad
  \Mod_{\R,\top}^\simeq \simeq \coprod_{d\ge 0}\BO(d).\]
Very generally, for any $E_\infty$-algebra $A$ and any space $X$, there is a
canonical map of spectra $\B^\infty\Map(X,A)\to \Map(X,\B^\infty A)$. Note
that the source of the map is always connective, whereas the target may be
non-connective. For $A=\Mod_{\bF,\top}^\simeq$, we show:\looseness-3

\begin{atheo}
 \label{thm:C}
 For $\bF=\C,\R$ and for any finite cell complex\footnote{We will prove the
   corresponding $\infty$-categorical statement, which works for $X$ any compact
   anima.} $X$, the canonical map of
 spectra
 \[\B^\infty \Map(X,\Mod_{\bF,\top}^\simeq))
   \to\Map(X,\B^\infty \Mod_{\bF,\top}^\simeq)\]%
 is an equivalence after passing to connective covers.  Specialising to the case
 $X=S^1$, we have equivalences of spectra
 \begin{align*}
   \B^\infty\Map(S^1,\Mod_{\C,\top}^\simeq)&\simeq\Sigma^\infty_+S^1\wedge \mathrm{ku},\\
   \B^\infty\Map(S^1,\Mod_{\R,\top}^\simeq)&\simeq\mathrm{colim}\left(\mathrm{ko}\overset{r}{\longleftarrow}\mathrm{ku}\overset{r}{\longrightarrow}\mathrm{ko}\right).
 \end{align*}
\end{atheo}

Here $\mathrm{ku}$ and $\mathrm{ko}$ denote the connective complex and real
$K$-theory spectra, respectively, and $r\colon\mathrm{ku}\to\mathrm{ko}$ is the
‘realification’, induced by restriction of scalars $\C\to \R$. We point out that
the infinite loop space of the left hand side
$\B^\infty\Map(X,\Mod^\simeq_{\bF,\top})$ is exactly the group completion of the
$E_\infty$-algebra of vector bundles over $X$, recovering Atiyah's original
definition of topological $K$-theory of $X$. In the case of $X=S^1$, this
spectrum can be regarded as Segal’s $K$-theory of the topologically enriched
category of finite-dimensional vector spaces over $\bF$ together with an
automorphism.\looseness-1

The first statement of \cref{thm:C}, specialised to $X=S^1$, is in contrast with
\cref{thm:A,thm:B}, which imply that, for a discrete field $\bF$, the canonical map
\[K^\oplus(\Mod_\bF^\aut)\to \Map(S^1,K^\oplus(\Mod_\bF))\]%
is far from being an equivalence (even after passing to connective covers) as
the left-hand side is identified with the direct sum
$\bigoplus_{(t)\neq\frm\subset\bF[t]}\bigoplus_{i\ge1} K(\bF[t]/\frm)$, whereas
the right-hand side has the rather `smaller' homotopy type
$K(\bF)\oplus\Sigma^{-1} K(\bF)$.\looseness-1

There is, however, also a striking similarity between \cref{thm:C} and
\cref{thm:A}: We observe that the maximal ideals in $\C[t]$ different from
$\pa{t}$ naturally form a \emph{space}, namely $\C\setminus\{0\}\simeq S^1$, and
every maximal ideal $\frm$ yields the `same' residue field
$\C\cong\C[t]/\frm$. Similarly, maximal ideals in $\R[t]$ different from
$\pa{t}$ are parametrised by the space
$\{z\in\C\setminus\{0\}:\Im(z)\ge0\}\simeq[0,1]$. This space contains the
subspace $\R\setminus\{0\}\simeq\{0,1\}$, and the residue field $\R[t]/\frm$ is
isomorphic to $\R$ or to $\C$, depending on whether the parameter of $\frm$ lies
in $\R\setminus\{0\}$ or not. Finally, the lack of terms corresponding to Jordan
blocks of size at least $2$ is, intuitively, due to the fact that in the
topological setting every automorphism of a finite-dimensional vector space can
be canonically homotoped to a unitary or orthogonal one, which is
semisimple.\looseness-1

\subsection{Related work}\label{subsec:related}

The \emph{Quillen} $K$-theory of modules with an automorphisms, more precisely
the kernel of
$\QK_{\smash i\vphantom{x}}(\Mod^\aut_\bF)\to \QK_{\smash
  i\vphantom{x}}(\Mod_\bF)$, has been studied in \cite{Grayson1976,Grayson1979}.
Furthermore, \cite{Almkvist1}, describes $\QK_0$ of the category of modules with
an \emph{endomorphism}; the higher $K$-groups have been also discussed in
\cite{Almkvist2,Stienstra2}. In particular, we point out the similarity between
the decomposition of \cite[Thm.\,5.2]{Almkvist2} for Quillen $K$-theory and our
\cref{thm:A}. A more modern treatment of Quillen $K$-theory of endomorphisms can
be found in \cite[§\,3]{BGT}.

The additive $K$-theory of modules with an automorphism is a key ingredient in a
description of a weight filtration of Quillen $K$-theory of rings, due to
\cite{Grayson1995}. More precisely, Grayson considers, for a given ring $R$, a
simplicial ring $R\bA^\bullet$ whose $0$-simplices are $R$ itself. The
$n$\textsuperscript{th} filtration component on $\QK(R)$ is built out of the
Segal $K$-theories of $R\bA^\bullet$-modules together with at most $n$ mutually
commuting automorphisms. In that context, we calculated the $0$-simplices of the
first filtration step in the case where $R$ is a field.

The current article can also be regarded as an instance of the following
problem: Replacing $E_\infty$ by $E_1$ in the above discussion, the free loop
space\footnote{We focus on maps from $S^1$ for simplicity; the same problem can
  be reformulated with respect to maps from an arbitrary space.} $\Map(S^1,A)$
of an $E_1$-algebra $A$ has a pointwise $E_1$-algebra structure, admitting a
group-completion $\Omega \B\Map(S^1,A)$. We might also first group-complete $A$
and then take the free loop space, obtaining the space $\Map(S^1,\Omega \B
A)$. As before, there is a canonical $E_1$-map\looseness-1
\[\Omega \B \Map(S^1,A)\to \Map(S^1,\Omega \B A)\]
and we are broadly interested in understanding, in concrete examples, whether
this map is an equivalence, or rather how much this map is \emph{not} an
equivalence.  In \cite{Bianchi-Kranhold-Reinhold}, we determined, in a slightly
different setting, the homotopy type of the group-completion of $\Map(S^1,A)$,
where $A$ is the $E_1$-algebra given by the disjoint union of classifying spaces
of either of the following three sequences of groups: mapping class groups of
oriented surfaces of genus $g\ge 0$ with one boundary curve, Artin braid groups,
and symmetric groups. In all these cases, the above
map is far from being an equivalence.  This article studies the case of $A$
being the disjoint union of classifying spaces of general linear groups
$\GL_d(\bF)$.\looseness-1

\subsection*{Acknowledgements and funding}
We would like to thank Søren Galatius and Jens Reinhold for sparking our
interest in mapping spaces into $E_1$-algebras and their group-completion, and
for drawing our attention to the work of Grayson.  We furthermore owe thanks to
Lars Hesselholt and Bjørn Ian Dundas for making us aware of Auslander and
Sherman’s results. In addition to that, we are grateful to Simon Gritschacher,
Kaif Hilman, Manuel Krannich, and Christian Mauz for helpful conversations on
the subject. Finally, we would like to thank the anonymous referee for many
inspiring comments, which significantly improved the paper, including a way to
get around a superfluous assumption in \cref{sec:eigen}.\looseness-1

A.\,B. was partially supported by the \emph{European Research Council} under the
\emph{European Union’s Horizon 2020 research and innovation programme} (grant
agreement No.\ 772960),
by\kern-.18px~the\kern-.18px~\emph{Danish\kern-.18px~National\kern-.18px~Research\kern-.18px~Foundation}\kern-.18px~through\kern-.18px~the\kern-.18px~\emph{Copenhagen\kern-.18px~Centre\kern-.18px~for\kern-.18px~Geometry\linebreak
  and Topology} (\acr{DNRF}151), and by the Max Planck Institute for Mathematics
in Bonn.\looseness-1

\section{Preliminaries}
\label{sec:prelim}
\subsection{Segal \texorpdfstring{$\mathbold{K}$}{K}-theory}
\enlargethispage{\baselineskip}

We start by recalling the definition of Segal $K$-theory for symmetric monoidal
categories and some of its properties. While the most part of the paper can be
read and understood with Segal’s original construction \cite{Segal} in mind, we
decided to use the more modern language of $\infty$-categories from
\cite{Lurie1,Lurie2}.\looseness-1

\begin{nota}
  Let $\Cat_\infty$ and $\caS$ denote the (large) $\infty$-categories of small
  $\infty$-categories and small $\infty$-groupoids (a.k.a.\ \emph{spaces}), respectively. The
  $\infty$-categories of small symmetric monoidal $\infty$-categories and small
  symmetric monoidal $\infty$-groupoids (a.k.a.\ $E_\infty$-algebras in spaces)
  are denoted by $\CMon(\Cat_\infty)$ and $\CMon(\caS)$, respectively.
\end{nota}

\begin{rmd}
  The inclusion $\caS\hookrightarrow\Cat_\infty$ has a right-adjoint
  $({-})^\simeq\colon\Cat_\infty\to\caS$, called the \emph{core groupoid
    functor} \cite[Cor.\,5.1.17]{Land}. Since $({-})^\simeq$ preserves products,
  it induces a functor between categories of commutative monoid objects, which
  we shall still call $(-)^\simeq\colon\CMon(\Cat_\infty)\to\CMon(\caS)$. It is
  again right-adjoint to the forgetful functor
  $\CMon(\caS)\to \CMon(\Cat_\infty)$, see \cite[Lem.\,6.1]{GGN}.\looseness-1
\end{rmd}

\begin{rmd}
  The $\infty$-category of spectra is denoted by $\Sp$, and the full subcategory
  of connective spectra is denoted by by $\Sp^\cn$. The latter is equivalent to the full
  subcategory of $\CMon(\caS)$ spanned by group-like objects
  \cite[{}5.2.6.26]{Lurie2}, via the \emph{infinite loop space} functor
  $\Omega^\infty\colon\Sp^\cn\hookrightarrow\CMon(\caS)$. The functor
  $\Omega^\infty$ has a left-adjoint $\B^\infty\colon\CMon(\caS)\to\Sp^\cn$, see
  \cite{May1974} and \cite[Rmk.\,4.5]{GGN}.
\end{rmd}

Now we can give the definiton of Segal $K$-theory as in \cite[Def.\,8.3]{GGN}:

\begin{defi}\label{defi:SegalK}
  We define \emph{Segal K-theory} to be the composition of functors
  \[
    \begin{tikzcd}
      K^S\colon \CMon(\Cat_\infty)\ar[r,"(-)^\simeq"] &\CMon(\caS)\ar[r,"\B^\infty"]&\Sp^\cn.
    \end{tikzcd}
  \]
\end{defi}

\subsection{Mapping spaces and representations}

In this subsection, we make the aforementioned connection between categories of
automorphisms and free loop spaces precise.

\begin{constr}\label{constr:Fun}
  Let $\bfC$ be a symmetric monoidal $\infty$-category and let $\bfI$ be any
  small $\infty$-category.  Then the $\infty$-category $\Map(\bfI,\bfC)$ of
  functors from $\bfI$ to $\bfC$ is again symmetric monoidal by pointwise
  operations, and it is an $\infty$-groupoid if $\bfC$ is.

  In the case where both $\bfI$ and $\bfC$ are small $\infty$-groupoids, i.e.\
  spaces, the $\infty$-groupoid $\Map(\bfI,\bfC)$ can be regarded as a classical
  mapping space between spaces, which we also denote, more traditionally, as
  $\Map(\bfI,\bfC)$.\looseness-1
\end{constr}

\begin{expl}\label{ex:1cat}
  For a discrete monoid $G$, we denote by $\B G$ the $1$-category with a single
  object that has $G$ as its endomorphism monoid.
  
  If $\bfC$ is a (classical) category, then $\Map(\B G,\bfC)$ is the category
  of $G$-representations in $\bfC$; and if, additionally, $\bfC$ is symmetric
  monoidal, then the monoidal structure on $\Map(\B G,\bfC)$ is given by taking
  monoidal products of $G$-representations.

  For example, $\Map(\B\Z,\bfC)$ is the category whose objects are pairs
  $(X,\kappa)$ where $X$ is an object of $\bfC$ and $\kappa$ is an automorphism of
  $X$, and whose arrows $(X,\kappa)\to (X',\kappa')$ are morphisms
  $\phi\colon X\to X'$ with $\kappa'\circ\phi=\phi\circ\kappa$.
\end{expl}

\begin{nota}
  For any $\infty$-category $\bfC$, we write $\bfC^\aut\coloneqq \Map(\B\Z,\bfC)$.
\end{nota}

\begin{lem}\label{lem:mapS1}
  For any small symmetric monoidal $\infty$-category $\bfC$, we have an equivalence
  of connective spectra $K^S(\bfC^\aut)\simeq \B^\infty\Map(S^1,\bfC^\simeq)$. In
  particular, for a field $\bF$, we have an equivalence of connective spectra
  \[K^S(\Mod_\bF^\aut) \simeq \B^\infty\Map\left(S^1,\coprod_{d\ge 0}\B\on{GL}_d(\bF)\right).\]
\end{lem}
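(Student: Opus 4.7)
The plan is to identify the symmetric monoidal $\infty$-groupoid $(\bfC^\aut)^\simeq$ with the $E_\infty$-algebra $\Map(S^1,\bfC^\simeq)$, where the right-hand side carries the pointwise symmetric monoidal structure induced from $\bfC^\simeq$; the first claim will then follow by applying $\B^\infty$ to both sides and invoking \cref{defi:SegalK}.

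First I would exploit the fact that $\B\Z$ is an $\infty$-groupoid equivalent to $S^1$. Since every functor from an $\infty$-groupoid to an $\infty$-category factors uniquely through the core subcategory, the inclusion $\bfC^\simeq\hookrightarrow\bfC$ induces an equivalence $\Fun(\B\Z,\bfC^\simeq)\xrightarrow{\simeq}\Fun(\B\Z,\bfC)=\bfC^\aut$. The left-hand side is a functor category between $\infty$-groupoids and is hence itself an $\infty$-groupoid; in particular it agrees with its own core, and using \cref{constr:Fun} we rewrite it as $\Map(\B\Z,\bfC^\simeq)\simeq\Map(S^1,\bfC^\simeq)$. Combining these observations yields
\[(\bfC^\aut)^\simeq\simeq\Map(S^1,\bfC^\simeq).\]

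Next I would verify that this equivalence respects the pointwise $E_\infty$-algebra structures on both sides. This compatibility is natural in $\bfC$ as an object of $\CMon(\Cat_\infty)$ and reduces to the statement that the right-adjoint $({-})^\simeq\colon\CMon(\Cat_\infty)\to\CMon(\caS)$ preserves the cotensoring by $\B\Z$, which in turn holds because right-adjoints preserve limits. Applying $\B^\infty$ to both sides then yields the first equivalence of the lemma:
\[K^\oplus(\bfC^\aut)=\B^\infty(\bfC^\aut)^\simeq\simeq\B^\infty\Map(S^1,\bfC^\simeq).\]
For the special case $\bfC=\Mod_\bF$, I would plug in the standard identification $\Mod_\bF^\simeq\simeq\coprod_{d\ge0}\B\GL_d(\bF)$ as $E_\infty$-algebras, which comes from decomposing the groupoid of finite-dimensional $\bF$-vector spaces by dimension and identifying the automorphism group of $\bF^d$ with $\GL_d(\bF)$.

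The main subtlety will be making the compatibility of the core-equivalence with the $E_\infty$-algebra structures rigorous; once this naturality is in place, the rest is a routine assembly of the definitions recalled in \cref{sec:prelim}.
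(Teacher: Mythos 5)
Your route is the same as the paper's: identify $(\bfC^\aut)^\simeq$ with $\Map(S^1,\bfC^\simeq)$ compatibly with the pointwise $E_\infty$-structures, apply $\B^\infty$, and then plug in $\Mod_\bF^\simeq\simeq\coprod_{d\ge0}\B\GL_d(\bF)$. However, your key intermediate claim is false as written. You assert that the inclusion $\bfC^\simeq\hookrightarrow\bfC$ induces an equivalence of $\infty$-categories $\Fun(\B\Z,\bfC^\simeq)\to\Fun(\B\Z,\bfC)=\bfC^\aut$. This cannot hold: the source is an $\infty$-groupoid, while the target generally is not. Already for $\bfC=\Mod_\bF$, a morphism $(V,\kappa)\to(V',\kappa')$ in $\Mod_\bF^\aut$ is an arbitrary equivariant linear map, not necessarily invertible. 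The principle you invoke — that a functor out of an $\infty$-groupoid factors through the core — controls the \emph{objects} of the functor category, but says nothing about natural transformations, which are functors out of $\B\Z\times\Delta^1$, not out of an $\infty$-groupoid; so the functor you write down is essentially surjective but not full.

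The correct statement, which is exactly what the paper cites from Land (Rmk.~5.1.18) and which follows from the adjunction $\iota\dashv(-)^\simeq$ applied to mapping spaces,
\[
\Fun(\B\Z,\bfC)^\simeq \;=\; \Map_{\Cat_\infty}(\B\Z,\bfC)\;\simeq\;\Map_{\caS}(\B\Z,\bfC^\simeq)\;=\;\Fun(\B\Z,\bfC^\simeq),
\]
is that your functor identifies $\Fun(\B\Z,\bfC^\simeq)$ with the \emph{core} $(\bfC^\aut)^\simeq$ of the target. Since that core is precisely what enters $K^\oplus(\bfC^\aut)=\B^\infty(\bfC^\aut)^\simeq$, your conclusion $(\bfC^\aut)^\simeq\simeq\Map(S^1,\bfC^\simeq)$ is correct, and the remainder of your argument — compatibility with the pointwise symmetric monoidal structures via product-preservation of $(-)^\simeq$, and the dimension decomposition of $\Mod_\bF^\simeq$ — goes through as in the paper. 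So the fix is local: replace the claimed equivalence of functor categories by the equivalence onto the core.
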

\begin{proof}
  Since $\B\Z\simeq S^1$ is an $\infty$-groupoid, the canonical functor of
  symmetric monoidal $\infty$-groupoids
  $\Map(S^1,\bfC^\simeq)\to \Map(S^1,\bfC)^\simeq$ is an equivalence by
  \cite[Rmk.\,5.1.18]{Land}. We hence have an equivalence of spectra\looseness-1
  \[
    K^S(\bfC^\aut)= \B^\infty \Map(S^1,\bfC)^{\simeq}\simeq
    \B^\infty \Map(S^1,\bfC^{\simeq}) =\B^\infty\Map(S^1,\bfC^\simeq).
  \]
  In the special case of $\bfC=\Mod_\bF$, we use that
  the $E_\infty$-algebra $\Mod_\bF^\simeq$ is equivalent to the disjoint union
  of classifying spaces $\coprod_{d\ge 0}\B\on{GL}_d(\bF)$.
\end{proof}

Thus, our original question can be phrased geometrically as follows: We want
to study, for a field $\bF$, the group-completion of the free loop space of
$\coprod_{d\ge 0}\B\on{GL}_d(\bF)$.

\subsection{Quillen \texorpdfstring{$\mathbold{K}$}{K}-theory and additive
  \texorpdfstring{$\mathbold{K}$}{K}-theory}

\begin{rmd}
  An \emph{exact structure} on an abelian category $\bfA$ is a class $\bfE$ of
  short sequences $A\to A'\to A''$ which satisfies a list axioms
  \cite[§2.1]{Quillen}. The pair $(\bfA,\bfE)$ is then called an \emph{exact
    category}. An \emph{exact functor} $(\bfA,\bfE)\to (\bfA',\bfE')$ is an
  additive functor $F\colon \bfA\to\bfA'$ with $F(\bfE)\subset \bfE'$. We
  refrain from recalling all axioms, as we will only be concerned with two
  classes of exact structures:\looseness-1
  \begin{itemize}
  \item The collection of all \emph{split} short exact sequences
    $A\to A\oplus A''\to A''$ is an exact structure on $\bfA$; it is actually
    the smallest one satisfying Quillen's axioms.  We refer to this as the
    \emph{split} exact structure on $\bfA$.
  \item The collection of \emph{all} short exact sequences is an exact structure
    on $\bfA$, which we refer to as the \emph{maximal} exact structure on
    $\bfA$. If not stated otherwise, this is the canonical exact structure that
    we consider on an abelian category.\looseness-1
  \end{itemize}
  For an exact category $(\bfA,\bfE)$, its Quillen $K$-theory has been defined
  as a loop space in \cite[§\,2.2]{Quillen}, further deloopings have been
  studied in \cite{Waldhausen,GilletGrayson,Jardine,Shimakawa}. In this work,
  the \emph{Quillen $K$-theory} $K(\bfA,\bfE)$ is a (not necessarily connective)
  spectrum, using the definition from \cite[§\,12.1]{Schlichting}.\looseness-1
\end{rmd}

\begin{nota}
  For an abelian category $\bfA$ we denote by $K^\oplus(\bfA)$ and $\QK(\bfA)$
  Quillen's $K$-theories of $\bfA$ with respect to the maximal and the split
  exact structures, respectively. We refer to the first as \emph{additive
    $K$-theory} of $\bfA$, and to the second, by abuse of terminology, simply as
  \emph{Quillen's $K$-theory} of $\bfA$.
\end{nota}

\begin{rmd}
  Any abelian category $\bfA$ can be regarded as a symmetric monoidal category
  via its (co-)cartesian structure.  Then it is a classical result that
  $K^S(\bfA)$ is naturally equivalent to the connective cover of
  $K^\oplus(\bfA)$, see \cite{Grayson1976a} for a proof on the level of spaces
  and \cite[Thm.\,10.2]{BohmannOsorno} and \cite[Rmk.\,9.33]{BGT2} for a proof
  on the level of spectra.
\end{rmd}

\begin{rmd}\label{rmd:noeth}
  We call an abelian category $\bfA$ \emph{noetherian} if for each object $A$ of
  $\bfA$, each ascending chain of subobjects
  $A_0\subset A_1\subset\dotsb\subset A$ eventually stops. It is shown in
  \cite[Thm.\,7]{Schlichting} that if $\bfA$ is noetherian, then $K(\bfA)$ is
  connective.
\end{rmd}

\begin{expl}
  If $\bF$ is any field, then the maximal and the split exact structure on the
  (small) abelian category $\Mod_\bF$ of finite-dimensional vector spaces over
  $\bF$ are equal. Therefore, the spectra $K^\oplus(\Mod_\bF)$ and
  $\QK(\Mod_\bF)$ are equivalent to each other. As $\Mod_\bF$ is noetherian,
  these spectra are connective, and hence agree with $K^S(\Mod_\bF)$. We shall
  denote their homotopy type simply by $K(\bF)$.
\end{expl}

\begin{rmd}\label{constr:comp}
  For a general abelian category $\bfA$, Quillen $K$-theory differs from
  additive $K$-theory, but there is a comparison map
  $\omega_A\colon K^\oplus(\bfA)\to \QK(\bfA)$, since the identity of $\bfA$ is
  an exact functor from the split to the maximal exact structure.
\end{rmd}

\subsection{Semiadditivity}

\begin{rmk}
  The $\infty$-category of (small) abelian categories and additive functors, and
  the $\infty$-category of spectra, are \emph{semiadditive}, i.e.\ initial and
  terminal objects exist and coincide, and for each finite collection
  $(X_i)_{i\in J}$ of objects, the canonical morphism from the coproduct
  $\coprod_{i\in J}X_i$ to the product $\prod_{i\in J}X_i$ is an equivalence,
  see e.g.\ \cite[Prop.\,2.3+8]{GGN}.
\end{rmk}

\begin{nota}
  For an arbitrary set $I$ and a family $(X_i)_{i\in I}$ of either abelian
  categories or spectra, we denote by
  $\bigoplus_{i\in I}X_i\coloneqq \coprod_{i\in I}X_i$ the categorical
  \emph{coproduct}, and we will refer to it as a `direct sum'.
\end{nota}

\begin{rmk}
  \label{rmk:coproductofproducts}
  In any ambient category, a coproduct $\coprod_{i\in I}X_i$ over an arbitrary
  set $I$ is the filtered colimit of the coproducts $\coprod_{i\in J}X_i$ over the poset of
  finite subset $J\subseteq I$. If, additionally, our ambient category is
  semiadditive, then we can replace each of the latter finite coproducts by a
  finite product and, accordingly, identify $\coprod_{i\in I}X_i$ with the
  filtered colimit of the finite \emph{products} $\prod_{i\in J}X_i$.
\end{rmk}

\begin{lem}\label{lem:dirSumAdd}
  Let $(\bfA_i)_{i\in I}$ be a family of abelian categories. Then the following
  canonical map of spectra is an equivalence:
  \[\bigoplus_{i\in I}K^\oplus(\bfA_i)\to K^\oplus\left(\bigoplus_{i\in I}\bfA_i\right).\]
\end{lem}
\begin{proof}
  By \cite[Cor.\,4+5]{Schlichting}, the functor $(\bfA,\bfE)\mapsto K(\bfA,\bfE)$ from the category of
  exact categories to spectra commutes with finite products and filtered
  colimits. Hence the statement follows from the observation that taking finite
  products and filtered colimits of split exact categories (in the category of
  exact categories) agrees with taking finite products and filtered colimits of
  the underlying abelian categories and endowing the result with its split exact
  structure.
\end{proof}

\section{Primary decompositions of automorphisms}
\label{sec:eigen}
The goal of this section is to prove \cref{thm:A}.
\enlargethispage{\baselineskip}

\begin{nota}
  Let $R$ be a commutative ring. For each $a\in R$, we denote by $Ra=\pa{a}$ the
  ideal generated by $a$, and we write $R/a\coloneqq R/\pa{a}$ for the quotient
  ring.
\end{nota}

\begin{rmd}[Primary decomposition]\label{rmd:prim}
  Let $R$ be a principal ideal domain and let $M$ be a finitely generated
  torsion module over $R$. Then there are maximal ideals
  $\frm_1,\dotsc,\frm_n\subset R$ and positive integers $r_1,\dotsc,r_n>0$, unique up to permutation, with
  \[M\cong \bigoplus_{i=1}^n R/\frm_i^{r_i}.\]%
\end{rmd}

\begin{expl}\label{ex:Ft}
  For a field $\bF$, we identify the category $\Mod_\bF^{\text{end}}$ of
  finite-dimensional vector spaces over $\bF$ together with an endomorphism,
  with the full abelian subcategory
  $\Mod^\tors_{\smash{\bF[t]}}\subseteq \Mod_{\smash{\bF[t]}}$ spanned by those
  $\bF[t]$-modules $M$ with $\dim_\bF M<\infty$: these are precisely the
  finitely generated torsion modules. In this description, the endomorphism of a
  vector space is given by multiplication by $t$ on a module.\looseness-1

  By the above primary decomposition theorem, we obtain a decomposition of each
  $M\in\Mod^\tors_{\smash{\bF[t]}}$ into summands of the form
  $\bF[t]/\frm_i^{\smash{r_i}}$, where each $\frm_i$ is generated by an
  irreducible polynomial. The endomorphism of $M$, given by multiplication by
  $t$, is an automorphism if and only if none of these ideals $\frm_i$ equals $\pa{t}$.
\end{expl}

\begin{defi}
  For a maximal ideal $\frm\subset\bF[t]$, we let $\Mod_{\smash{\bF[t]}}^\frm$
  be the full abelian subcategory of $\smash{\Mod_{\bF[t]}^\tors}$ containing
  all modules isomorphic to $\bigoplus_{i=1}^n\bF[t]/\frm^{r_i}$ for some
  $n\ge 0$ and $r_1,\dotsc,r_n>0$.
\end{defi}

\begin{expl}\label{ex:Nil}
  The category $\Mod_{\bF[t]}^{\smash{\pa{t}}}$ coincides with the abelian category
  of finite-dimensional vector spaces over $\bF$ together with a nilpotent
  endomorphism, also denoted by $\Mod_\bF^\nil$. This abelian category will play
  a central rôle in \cref{sec:nil}.\looseness-1
\end{expl}

\begin{lem}\label{lem:prim}
  We have an equivalence of abelian categories
  \[\Mod_\bF^\aut \simeq
    \bigoplus_{\substack{\pa{t}\ne \frm\subset \bF[t]\\\mathrm{maximal~ideal}}} \Mod_{\bF[t]}^\frm.\]
\end{lem}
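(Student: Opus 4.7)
The plan is to identify both sides with the full subcategory of $\Mod_{\bF[t]}^\tors$ whose objects admit a primary decomposition avoiding the ideal $\pa{t}$, and to use \cref{rmd:prim} to produce a natural splitting.

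First, I would make the identification of \cref{ex:Ft} explicit for the automorphism case: a pair $(V,\kappa)$ with $\kappa$ an automorphism is the same as a finitely generated torsion $\bF[t]$-module on which multiplication by $t$ is invertible. By \cref{rmd:prim}, this is equivalent to saying that the primary decomposition of the associated $\bF[t]$-module involves only maximal ideals $\frm\ne \pa{t}$. This identifies $\Mod_\bF^\aut$, as a symmetric monoidal $1$-category, with the full symmetric monoidal subcategory $\bfA\subseteq \Mod_{\bF[t]}^\tors$ spanned by such modules.

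Next, I would construct the comparison functor. For each maximal ideal $\frm\ne\pa{t}$, the evident inclusion $\Mod_{\bF[t]}^\frm\hookrightarrow \bfA$ is symmetric monoidal, so by the universal property of the coproduct in $\CMon(\Cat_\infty)$ these assemble into a single symmetric monoidal functor
\[\textstyle\Phi\lon\bigoplus_{\pa{t}\ne\frm}\Mod_{\bF[t]}^\frm\longrightarrow \bfA\simeq\Mod_\bF^\aut.\]
To describe $\Phi$ and its inverse concretely, I would invoke \cref{rmk:coproductofproducts}: the coproduct is the filtered colimit, over finite sets $S$ of maximal ideals, of the finite products $\prod_{\frm\in S}\Mod_{\bF[t]}^\frm$. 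An object of this colimit is therefore represented by a finite tuple of $\frm$-primary modules, and morphisms are componentwise.

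The inverse of $\Phi$ is then provided by primary decomposition: send $M\in \bfA$ to the tuple $(M_\frm)_\frm$ of its $\frm$-primary components $M_\frm=\bigcup_k\ker(\frm^k\colon M\to M)$, which is supported on a finite set of ideals by \cref{rmd:prim}. This is functorial because any $\bF[t]$-linear map $\phi\colon M\to N$ satisfies $\phi(M_\frm)\subseteq N_\frm$, and it is manifestly symmetric monoidal since primary components are additive under direct sums. Essential surjectivity of $\Phi$ follows from \cref{rmd:prim}, and full faithfulness from the fact that a morphism in $\bfA$ is the same as a family of morphisms between $\frm$-primary components. The only genuinely delicate point is matching the morphism sets on the coproduct side with those on the $\bfA$-side; this is handled by the observation that between two objects with finite support $S,S'$, the morphisms in the filtered colimit are computed at any finite $S\cup S'$, where the product description gives exactly a tuple of $\frm$-component morphisms—matching the primary-component description of $\Hom_\bfA(M,N)$.
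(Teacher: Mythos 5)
Your proposal is correct and follows essentially the same route as the paper: the comparison functor is the same one (send a finitely supported family to its direct sum), essential surjectivity comes from primary decomposition, and your observation that any $\bF[t]$-linear map satisfies $\phi(M_\frm)\subseteq N_\frm$ is just a repackaging of the paper's fullness argument that there are no non-zero maps between modules annihilated by powers of distinct maximal ideals. The only cosmetic difference is that you build an explicit inverse via primary components and spell out the filtered-colimit description of the coproduct, whereas the paper verifies fully faithful and essentially surjective directly; both are fine.
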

\begin{proof}
  We have an additive functor
  $\bigoplus_\frm \Mod_{\bF[t]}^\frm\to \Mod_\bF^\aut$ given by taking a
  finitely supported family $(M_\frm)_{\frm}$ to the direct sum
  $\bigoplus_\frm M_\frm$. This functor is essentially surjective by the primary
  decomposition theorem, and it is clearly faithful.  It is also full, since for
  two different maximal ideals $\frm_1$ and $\frm_2$, the powers
  $\frm_1^{\smash{r_1}}$ and $\frm_2^{\smash{r_2}}$ are generated by coprime
  polynomials $q_1$ and $q_2$, and in such a situation, there are no non-zero
  $\bF[t]$-linear maps $\bF[t]/q_1\to \bF[t]/q_2$.
\end{proof}

Note that under the equivalence of \cref{lem:prim}, the retract
$\Mod_\bF\hookrightarrow \Mod_\bF^\aut$ from the introduction, given by endowing
a given vector space $V$ with the identity automorphism, is actually a retract
of the summand $\Mod_{\bF[t]}^{\smash{\pa{t-1}}}$, as the primary decomposition
of $(V,\Id_V)$ is given by $\bigoplus_{i=1}^{\smash{\dim(V)}}\bF[t]/\pa{t-1}$.
We can now study each summand $\Mod_{\bF[t]}^{\frm}$ separately.

\begin{lem}\label{lem:FExt}
  Let $\bF$ be a field and let $\frm\subset\bF[t]$ be any maximal ideal.  Then
  there is an equivalence of abelian categories
  \[\Mod_{\bF[t]}^\frm \simeq \Mod^\nil_{\bF[t]/\frm}.\]
\end{lem}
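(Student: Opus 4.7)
The plan is to construct, for every $r\ge 1$, an isomorphism of $\bF$-algebras $\bF[t]/p^r \cong \bL[\epsilon]/\epsilon^r$, where $p$ denotes a separable generator of $\frm$, $\bL := \bF[t]/\frm$, and $\epsilon$ is a formal variable. Since a module over $\bL[\epsilon]/\epsilon^r$ is exactly a pair $(V,N)$ with $V$ a finite-dimensional $\bL$-vector space and $N$ an $\bL$-linear nilpotent endomorphism of order at most $r$, passing to the colimit in $r$ will assemble these ring isomorphisms into the desired equivalence of abelian categories. Symmetric monoidality will be automatic, as the equivalence is induced by a ring isomorphism and hence preserves direct sums.

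The central arithmetic step is Hensel's lemma. Let $\tau \in \bL$ denote the class of $t$, so that $p(\tau)=0$ and, by separability, $p'(\tau) \ne 0$. Because the image of $p'(t)$ in the residue field $\bL$ of the Artin local ring $\bF[t]/p^r$ equals $p'(\tau)$, the element $p'(t)$ is a unit in $\bF[t]/p^r$. Starting from $t \in \bF[t]/p^r$ as an approximate root of $p$ (with error $p(t)=p \in (p)$), the Newton iteration $\tau_{k+1} = \tau_k - p(\tau_k)/p'(\tau_k)$ squares the error ideal at each step, so after finitely many iterations it stabilises at an element $\tau_r \in \bF[t]/p^r$ satisfying $p(\tau_r)=0$ and $\tau_r \equiv t \pmod{(p)}$. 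Choosing the $\tau_r$ compatibly along the tower $\bF[t]/p^{r+1} \twoheadrightarrow \bF[t]/p^r$ yields a coherent family of $\bF$-algebra embeddings $\bL = \bF[\tau] \hookrightarrow \bF[t]/p^r$ sending $\tau \mapsto \tau_r$.

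Setting $\epsilon := t - \tau_r$, the Taylor expansion $0 = p(\tau_r + \epsilon) = p'(\tau_r)\epsilon + O(\epsilon^2)$ shows that $p = \epsilon \cdot u$ with $u$ a unit, so $(\epsilon) = (p)$ as ideals in $\bF[t]/p^r$; in particular, $\epsilon^r = 0$. The subring $\bL[\epsilon] \subseteq \bF[t]/p^r$ contains both $\tau_r$ and $t = \tau_r + \epsilon$, hence equals $\bF[t]/p^r$, and a dimension count over $\bF$ (both sides having $\bF$-dimension $r\deg p$) upgrades the resulting surjection $\bL[\epsilon]/\epsilon^r \twoheadrightarrow \bF[t]/p^r$ to the claimed isomorphism of $\bF$-algebras. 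Taking the filtered colimit in $r$ of the resulting equivalences between $\Mod_{\bF[t]/p^r}$ and the full subcategory of $\Mod_\bL^\nil$ consisting of those $(V,N)$ with $N^r = 0$ then produces the desired equivalence $\Mod_{\bF[t]}^\frm \simeq \Mod_\bL^\nil$. I expect the most delicate point to be ensuring that the Hensel lifts $\tau_r$ fit into a coherent tower so that the individual ring isomorphisms assemble into a compatible system; once that is arranged, the remainder of the argument is formal.
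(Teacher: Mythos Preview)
Your argument is correct and takes a genuinely different route from the paper. The paper works entirely on the level of module categories: writing $\bL=\bF(\alpha)$ for a root $\alpha$ of $p$, it factors the desired equivalence through the intermediate category $\Mod_{\bL[t]}^\frm$ via the adjoint pair (extension of scalars along $\bF[t]\hookrightarrow\bL[t]$, restriction of scalars) composed with the adjoint pair (taking the $(t-\alpha)$-primary part, inclusion), and then verifies by hand that the unit and counit of the composite adjunction $L_2L_1\dashv R_1R_2$ are isomorphisms on the generators $\bF[t]/p^r$ and $\bL[t]/(t-\alpha)^r$. Your approach instead works directly at the level of rings, using Hensel's lemma to produce an explicit $\bF$-algebra isomorphism $\bF[t]/p^r\cong\bL[\epsilon]/\epsilon^r$ for each $r$, after which the equivalence of module categories is formal. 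Both arguments invoke separability at essentially the same point---the paper uses that $(t-\alpha)^2\nmid p$ in $\bL[t]$, you use that $p'(\tau)\ne 0$---but your route is more self-contained and makes the structure of the Artin local rings $\bF[t]/p^r$ transparent, at the cost of importing Hensel's lemma. Concerning your stated worry about coherence of the lifts $\tau_r$: this is in fact automatic, since the Hensel lift of a \emph{simple} root along a nilpotent thickening is unique, so the image of $\tau_{r+1}$ under $\bF[t]/p^{r+1}\twoheadrightarrow\bF[t]/p^r$ is forced to equal $\tau_r$ and no choices are involved.
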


We thank the referee for suggesting the following argument, proving
\cref{lem:FExt} without our original assumption that
$\frm$ is generated by a separable polynomial.\looseness-1

\begin{proof}
  We denote by $\smash{\widehat{\bF[t]}{}^{\frm}}:=\lim_{n\in\N}\bF[t]/\frm^n$
  the completion of $\bF[t]$ at the ideal $\frm$. Then
  $\smash{\Mod_{\bF[t]}^\frm}$ is isomorphic to the abelian category of
  $\frm$-nilpotent and finitely generated modules over
  $\smash{\widehat{\bF[t]}{}^{\frm}}$. Similarly, $\Mod_{\bF[t]/\frm}^\nil$ is
  isomorphic to the category of $(x)$-nilpotent, finitely generated modules over
  $(\bF[t]/\frm)\llb{x}$, the ring of power series over $\bF[t]/\frm$. It is
  then a classical theorem of Cohen \cite{Cohen} that
  $\smash{\widehat{\bF[t]}{}^{\frm}}$, being a complete discrete valuation ring
  of equal characteristic, is isomorphic \emph{as a commutative ring} to
  $(\bF[t]/\frm)\llb{x}$. This isomorphism lets the (unique) maximal ideals
  correspond to each other, and hence gives rise to an isomorphism between
  categories of finitely generated modules that are nilpotent with respect to
  the maximal ideal.\looseness-1
\end{proof}

\begin{rmk}
  \label{rmk:nonperfect}
  If the maximal ideal $\frm$ from \cref{lem:FExt} is generated by a separable
  polynomial $q$, then the desired equivalence can be made very explicit: Let
  $\alpha$ be some root $q$ in the algebraic closure of $\bF$ and let
  $\bL\coloneqq \bF(\alpha)$ be the field extension of $\bF$ by $\alpha$, which
  is isomorphic to the residue field $\bF[t]/\frm$. If $\Mod^\frm_{\bL[t]}$
  denotes the full subcategory of $\Mod^\tors_{\bL[t]}$ spanned by those modules
  $M$ for which there exists an $r\ge 0$ with $q^rM =0$, then the composition of
  additive functors
  \[
    \begin{tikzcd}[column sep=6em]
      \Mod^\frm_{\bF[t]}\ar{r}{M\mapsto \bL[t]\otimes_{\bF[t]} M} & %
      \Mod^\frm_{\bL[t]}\ar{r}{M\mapsto M_{t-\alpha}} & \Mod^{(t-\alpha)}_{\bL[t]}
    \end{tikzcd}
  \]
  is an equivalence, where $M_{t-\alpha}\subseteq M$ denotes the
  $(t-\alpha)$-primary part of $M$. The target category, however, is equivalent
  to $\smash{\Mod^{(t)}_{\smash{\bL[t]}} = \Mod^\nil_{\bF[t]/\frm}}$ by a simple
  shift of coordinates. Since all these functors are $\bF$-linear, the
  equivalence from \cref{lem:FExt} does induce an equivalence of $K(\bF)$-module
  spectra on $K^\oplus$.

  However, if $\frm$ is not generated by a separable polynomial, then there
  exists in general no isomorphism between $\smash{\widehat{\bF[t]}{}^{\frm}}$
  and $(\bF[t]/\frm)\llb{x}$ \emph{as $\bF$-algebras}.  For instance, let $p$ be
  a prime number, let $\bF\coloneqq \bF_p(a)$ be the field of rational functions
  in a variable $a$, and let $\frm\coloneqq (t^p-a)\subset\bF[t]$. Then
  $\smash{\widehat{\bF[t]}{}^{\frm}}$ is not isomorphic as an $\bF$-algebra to
  $(\bF[t]/\frm)\llb{x}$, since the element $a\in\bF$ admits no
  $p$\textsuperscript{th} root in the first ring
  $\smash{\widehat{\bF[t]}{}^{\frm}}$: in fact $a$ has no
  $p$\textsuperscript{th} root even in the quotient ring $\bF[t]/\frm^2$; yet
  the `same' element $a$ admits a $p$\textsuperscript{th} root in
  $(\bF[t]/\frm)\llb{x}$. We also remark that the isomorphism of rings
  $\smash{\widehat{\bF[t]}{}^{\frm}}\simeq(\bF[t]/\frm)\llb{x}$ proved by Cohen
  is non-canonical, i.e.\ it depends on certain choices, when $\bF$ is
  non-perfect and $\frm$ is generated by a non-separable polynomial; on the
  contrary, if $\bF$ is perfect, there exists a unique (hence canonical)
  isomorphism of $\bF$-algebras
  $\smash{\widehat{\bF[t]}{}^{\frm}}\simeq(\bF[t]/\frm)\llb{x}$.\looseness-1

  As a consequence of these remarks, we will have the following: if $\bF$ is a
  perfect field, then the isomorphism from \cref{thm:A} will be a canonical
  isomorphism of $K(\bF)$-modules; but if $\bF$ is non-perfect, then we will
  only obtain a non-canonical isomorphism of spectra.
\end{rmk}

\begin{proof}[Proof of \cref{thm:A}]
  We use from \cref{lem:dirSumAdd} that $K^\oplus$ is compatible with direct
  sums of abelian categories, and combine \cref{lem:prim} and \cref{lem:FExt} to
  obtain
  \[K^\oplus(\Mod^\aut_\bF) \simeq
    \bigoplus_{\substack{\pa{t}\ne \frm\subset \bF[t]\\\mathrm{maximal~ideal}}}
    K^\oplus(\Mod^\frm_{\bF[t]}) \simeq
    \bigoplus_{\substack{\pa{t}\ne \frm\subset \bF[t]\\\mathrm{maximal~ideal}}}
    K^\oplus(\Mod^\nil_{\bF[t]/\frm}).\qedhere\]%
\end{proof}

\begin{rmk}
  The argument of the proof of \cref{thm:A} also works with respect to the
  maximal exact structure, recovering \cite[Thm.\,5.2]{Almkvist2}.
\end{rmk}

\begin{expl}
  If $\bF$ is algebraically closed, then all maximal ideals are of the form
  $(t-\alpha)$ with $\alpha\in\bF$, whence in this case \cref{thm:A}
  yields an equivalence of spectra
  \[K^\oplus(\Mod_\bF^\aut) \simeq \bigoplus_{\alpha\in\bF\setminus\{0\}}
    K^\oplus(\Mod_\bF^\nil).\]
\end{expl}

\begin{rmk}\label{rmk:Endos}
  Recall from \cref{ex:Ft} that the absence of primary summands of the form
  $\bF[t]/(t^r)$ is exactly what tells automorphisms from general endomorphisms.
  The equivalence of \cref{lem:prim} can hence be extended to an 
  equivalence between $\Mod_\bF^{\text{end}}\coloneqq \Map(\B\N,\Mod_\bF)$, the
  abelian category of vector spaces with an endomorphism, and
  $\bigoplus_{\frm\subset\bF[t]}\Mod^\frm_{\bF[t]}$, where the maximal ideal
  $(t)$ is now part of the indexing set. Since \cref{lem:FExt} can also applied
  in this situation, the equivalence of \cref{thm:A} can be generalised to the
  following equivalence of spectra
  \[K^\oplus(\Mod_\bF^{\text{end}})\simeq
    \bigoplus_{\substack{\frm\subseteq\bF[t]\\\text{maximal
          ideal}}}K^\oplus(\Mod_{\bF[t]/\frm}^\nil).\]
\end{rmk}

\section{Segal \texorpdfstring{$\mathbold{K}$}{K}-theory of nilpotent endomorphisms}
\label{sec:nil}
The goal of this section is to prove \cref{thm:B}, i.e.\ to determine the
homotopy type of the spectrum $K^\oplus(\Mod_\bF^\nil)$. Throughout this
section, we will abbreviate the abelian category $\Mod_\bF^\nil$ by $\NF$ to
save space.  In order to prove \cref{thm:B}, we once again employ the primary
decomposition theorem from \cref{rmd:prim}.\looseness-1

\begin{rmd}
  As in \cref{ex:Nil}, we identify $\NF$ with the full abelian
  subcategory of $\Mod_{\smash{\bF[t]}}$ containing all $\bF[t]$-modules $M$
  which satisfy $\dim_\bF M<\infty$ and $t^rM=0$ for some integer $r\ge 0$.

  For each integer $r\ge 0$, the $\bF[t]$-module $M_r\coloneqq \bF[t]/t^r$ is
  called \emph{Jordan block of size $r$}; we have $\dim_\bF M_r=r$, and the
  $\bF[t]$-linear morphisms $M_s\to M_r$ are all of the form
  $\phi^{r,s}_q\colon x\mapsto q\cdot x$ for some polynomial $q\in\bF[t]$, with
  the additional requirement that $t^{r-s}$ must divide $q$ if
  $s<r$.\looseness-1

  The primary decomposition theorem tells us that each object in $\NF$
  is isomorphic to a direct sum of the form $M_{r_1}\oplus\dotsb\oplus M_{r_n}$
  with $r_1,\dotsc,r_n\ge 1$.
\end{rmd}

We first recall the \emph{Quillen} $K$-theory of $\NF$, which can
easily be determined by \emph{dévissage}, a method due to Quillen that will turn
out to be useful later as well:

\begin{rmd}[Dévissage]
  Let $\bfA$ be an abelian category and let $\bfA'\subset\bfA$ be a full
  subcategory, which contains $0$ and is closed under isomorphisms, as well as
  taking subobjects, quotients and finite products in $\bfA$. Then $\bfA'$ is
  itself an abelian category and the inclusion functor
  $\bfA'\hookrightarrow\bfA$ is exact.
  
  We say that $\bfA'$ \emph{filters} $\bfA$ if for each object $A$ of $\bfA$
  there is a filtration
  \[0=A^0\subset\dotsb\subset A^l=A\]%
  such that all quotients $A^{k+1}/A^k$ lie in $\bfA'$. In this case, the
  dévissage theorem tells us that the induced map $\QK(\bfA')\to \QK(\bfA)$ is
  an equivalence, see \cite[Thm.\,4]{Quillen}, as well as
  \cite[Cor.\,5.4.6]{Mochizuki} for a version for (not necessarily connective)
  spectra.
\end{rmd}

\begin{cor}\label{cor:KQeasy}
  The functor $U\colon \NF\to \Mod_\bF$ that forgets the nilpotent
  endomorphism induces an equivalence on Quillen K-theory.
\end{cor}

This result is known a special case of Grayson’s fundamental theorem
\cite[p.\,236]{Grayson1976a}. In our setting, it is a consequence of the primary
decomposition theorem:\looseness-1

\begin{proof}
  The functor $U$ has an exact section $S$ by identifying $\Mod_\bF$ with the
  subcategory of $\NF$ spanned by vector spaces with the zero
  endomorphism; we prove that this section induces an equivalence on $\QK$. The
  subcategory of trivial endomorphisms is closed under isomorphisms, subobjects,
  quotients, and finite products, so we are left to show that it filters
  $\NF$. By the primary decomposition theorem, it suffices to show
  that each $M_i$ admits such a filtration. This follows inductively, using that
  $M_1$ carries the trivial endomorphism and using the short exact
  sequences\looseness-1
  \[\begin{tikzcd}
      0\ar[r] & M_{r-1}\ar{r}{\phi^{r,r-1}_t} & M_r\ar{r}{\phi^{1,r}_1} & M_1\ar[r] & 0.
    \end{tikzcd}\qedhere\]
\end{proof}

In order to understand $K^\oplus(\NF)$, we shall study the comparison map
between additive $K$-theory and Quillen $K$-theory from \cref{constr:comp}:

\begin{rmd}\label{rmd:Sherman}
  Let $\bfA$ be an abelian category. In \cite{Sherman}, the fibre of the
  comparison map $\omega_\bfA\colon K^\oplus(\bfA)\to \QK(\bfA)$ from
  \cref{constr:comp} has been identified with $\QK(\bfA^\#)$, where $\bfA^\#$ is
  the following abelian category:\footnote{In \cite{Sherman} the category
    $\bfA^\#$ is denoted as $\hat\bfA_0$, as it is conceived in several steps.}
  We denote the category of abelian groups by $\mathbf{Ab}$ and let
  $[-,-]\colon \bfA^\op\times\bfA\to \mathbf{Ab}$ be the Hom-functor. Then
  $\bfA^\#$ is the full subcategory of the abelian category of additive functors
  $\bfA^\op\to \mathbf{Ab}$, containing those functors $F$ such that there is an
  epimorphism $\beta\colon B\tto B'$ in $\bfA$ with\looseness-1
  \[F\cong \on{coker}([-,\beta]\colon [-,B]\to [-,B']).\]%
  It is shown in \cite[Prop.\,2.1]{Auslander} that $\bfA^\#$ is closed under
  taking biproducts, kernels, and cokernels in the functor category, and hence
  is an abelian subcategory. We note that a sequence $0\to F\to F'\to F''\to 0$
  in $\bfA^\#$ is exact if and only if $0\to FA\to F'A\to F''A\to 0$ is an
  exact sequence of abelian groups for all $A\in\bfA$.\looseness-1

  We point out that \cite{Sherman} only describes the underlying space of the
  fibre; an identification on the level of spectra can be found in
  \cite[Thm.\,9]{Schlichting}.
\end{rmd}

\begin{lem}\label{lem:split}
  We have an equivalence of spectra
  $K^\oplus(\NF)\simeq K(\bF)\oplus \QK(\NF^\#)$.
\end{lem}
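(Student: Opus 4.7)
The plan is to obtain \cref{lem:split} by combining the Sherman--Schlichting fibre sequence from \cref{rmd:Sherman} with a splitting induced by the section $S\colon\Mod_\bF\to\Mod_\bF^\nil$ that endows a vector space with the zero endomorphism, already used in the proof of \cref{prop:KQeasy}.

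First I would specialise \cref{rmd:Sherman} to $\bfA=\Mod_\bF^\nil$ in order to obtain a fibre sequence of connective spectra
\[
K^Q((\Mod_\bF^\nil)^\#)\longrightarrow K^Q((\Mod_\bF^\nil)^\oplus)\overset{\omega}{\longrightarrow} K^Q(\Mod_\bF^\nil).
\]
The middle term $K^Q((\Mod_\bF^\nil)^\oplus)$ is equivalent to $K^\oplus(\Mod_\bF^\nil)$ by the classical comparison recalled in the preliminaries, since $(\Mod_\bF^\nil)^\oplus$ is, by construction, split exact. For the rightmost term, \cref{prop:KQeasy} identifies $K^Q(\Mod_\bF^\nil)\simeq K^Q(\Mod_\bF)$ via $U$, and the latter is in turn equivalent to $K^\oplus(\Mod_\bF)$ because $\Mod_\bF$ is semisimple, hence split exact.

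The key verification is that, under these identifications, the comparison map $\omega$ corresponds to the map $K^\oplus(U)\colon K^\oplus(\Mod_\bF^\nil)\to K^\oplus(\Mod_\bF)$ induced by the forgetful functor. This is a matter of naturality of $\omega_\bfA$ in \cref{constr:comp} with respect to exact functors, applied to the commutative square
\[
\begin{tikzcd}
(\Mod_\bF^\nil)^\oplus \ar{r}{\Id}\ar{d}[swap]{U} & \Mod_\bF^\nil \ar{d}{U}\\
(\Mod_\bF)^\oplus \ar{r}{\Id} & \Mod_\bF,
\end{tikzcd}
\]
noting that $(\Mod_\bF)^\oplus=\Mod_\bF$ as exact categories. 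After this identification, the fibre sequence reads
\[
K^Q((\Mod_\bF^\nil)^\#)\longrightarrow K^\oplus(\Mod_\bF^\nil)\overset{K^\oplus(U)}{\longrightarrow} K^\oplus(\Mod_\bF).
\]

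To finish, I would exhibit a splitting of $K^\oplus(U)$ coming from the symmetric monoidal section $S$: since $U\circ S=\Id_{\Mod_\bF}$, applying the functor $K^\oplus$ gives $K^\oplus(U)\circ K^\oplus(S)=\Id$, so the above fibre sequence splits, yielding the desired equivalence
\[
K^\oplus(\Mod_\bF^\nil)\simeq K^\oplus(\Mod_\bF)\oplus K^Q((\Mod_\bF^\nil)^\#).
\]
The main (mild) obstacle is the bookkeeping in the previous paragraph, i.e.\ tracking the Sherman--Schlichting comparison map through the two auxiliary equivalences; everything else is formal once the fibre sequence is in place and the retraction $U\circ S=\Id$ is invoked.
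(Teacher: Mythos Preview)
Your proposal is correct and follows essentially the same route as the paper: both specialise the Sherman--Schlichting fibre sequence to $\Mod_\bF^\nil$, use the naturality square for $\omega_\bullet$ with respect to the exact functor $U$ (together with the fact that $\omega_{\Mod_\bF}$ and $K^Q(U)$ are equivalences) to identify the comparison map with $K^\oplus(U)$ up to equivalence, and then split it via the section $S$. The only cosmetic difference is that the paper phrases the splitting as ``$K^Q(S^\oplus)$ is a section of $K^Q(U^\oplus)$'' rather than ``$K^\oplus(S)$ is a section of $K^\oplus(U)$'', which amounts to the same thing under the standard identification $K^Q(\bfA^\oplus)\simeq K^\oplus(\bfA)$.
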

\begin{proof}
  By naturality of the comparison maps $\omega_\bullet$ from \cref{constr:comp},
  we obtain a commuting square of spectra
 \[
    \begin{tikzcd}[column sep=3em]
      K^\oplus(\NF)\ar{r}{\omega_{\smash{\NF}}
      \vphantom{\omega_{A_A}}
      }
      \ar{d}[swap]{K^\oplus(U)} & \QK(\NF)\ar{d}{\QK(U)}[swap]{\simeq}\\
      K^\oplus(\Mod_\bF)\ar[r,"\omega_{\Mod_\bF}","\simeq"'] & \QK(\Mod_\bF),
    \end{tikzcd}
  \]
  where $U$ is the exact functor from \cref{cor:KQeasy}, having a
  section $S$. By functoriality, $K^\oplus(S)$ is a section of the left
  vertical map $K^\oplus(U)$; since the right vertical map $\QK(U)$ is an
  equivalence, we have that $K^\oplus(\NF)$ splits, up to equivalence, into
  $K^\oplus(\Mod_\bF)\simeq K(\bF)$ and the fibre of
  $\omega_{\smash{\NF}}$, which is
  $\QK(\NF^\#)$.\looseness-1
\end{proof}

It therefore remains to study $\QK$ of the abelian category $\NF^\#$.

\begin{lem}\label{lem:finite}
  Let $F$ be an object of $\NF^\#$. Then $\dim_\bF (FM_r)<\infty$ holds for each
  $r\ge 0$ and $FM_r=0$ for all but finitely many $r$. In particular, $\NF^\#$ is noetherian.
\end{lem}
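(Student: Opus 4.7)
The plan is to exploit the explicit presentation of an object $F\in\bfF$: by definition of $(\Mod_\bF^\nil)^\#$, there is an epimorphism $\beta\colon B\tto B'$ in $\Mod_\bF^\nil$ with $F\cong\on{coker}([-,\beta])$. Since cokernels in a functor category are computed objectwise, $FM_r$ is simply the cokernel of $[M_r,\beta]\colon [M_r,B]\to[M_r,B']$, regarded as an $\bF$-vector space (or, using the lift from \cref{rmk:lift}, as an $\bF[t]$-module).

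Finite-dimensionality is then immediate: any $\bF[t]$-linear map $g\colon M_r\to X$ is determined by $g(1)\in X$, so $[M_r,X]$ embeds $\bF$-linearly into $X$ for every $X\in\Mod_\bF^\nil$. Applying this to $X=B'$ yields $\dim_\bF FM_r \le \dim_\bF B' <\infty$.

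For the vanishing claim, I would pick an integer $R\ge 0$ with $t^R B=0$, which exists since $B\in\Mod_\bF^\nil$; because $\beta$ is surjective, one also has $t^R B'=\beta(t^R B)=0$. The main step is then to show that $[M_r,\beta]$ is surjective whenever $r\ge R$: given $g\colon M_r\to B'$, choose any preimage $b\in B$ of $g(1)$ under $\beta$; the identity $t^r b\in t^R B=0$ holds automatically, so the assignment $1\mapsto b$ extends to an $\bF[t]$-linear lift $\tilde g\colon M_r\to B$ of $g$ along $\beta$. Combined with the trivial vanishing $FM_0=0$ (since $M_0=0$), this shows that $FM_r$ can be nonzero only for $r\in\{1,\dotsc,R-1\}$, a finite set.

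No substantial obstacle is expected; the key point is that once $r$ exceeds the nilpotency exponent $R$ of $B$, the Jordan block $M_r$ behaves as if projective relative to $\beta$, so the lifting is unobstructed and the cokernel $FM_r$ vanishes.
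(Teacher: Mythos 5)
Your proposal is correct and follows essentially the same route as the paper: present $F$ as $\on{coker}([-,\beta])$, bound $\dim_\bF FM_r$ by $\dim_\bF B'$ via evaluation at $1$, and show $[M_r,\beta]$ is surjective for large $r$ by lifting $g(1)$ along the epimorphism $\beta$. The only cosmetic difference is that the paper phrases "large $r$" via a Jordan-block decomposition of the source ($r>\max r_i$) while you use the nilpotency exponent $R$ of $B$ directly; these are the same bound.
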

\begin{proof}
  There is an epimorphism
  $\beta\colon N= \bigoplus_{i=1}^n M_{\smash{r_{\smash i}}}\tto N'=\bigoplus_{j=1}^m M_{\smash{s_{\smash j}}}$
  such that $F$ is isomorphic to the cokernel of $[-,\beta]$. In particular,
  $FM_r$ is a quotient of $[M_r,N']$ and therefore finite-dimensional.
  Moreover, if $r$ is larger than each of the $r_i$, then the map
  $[M_r,\beta]\colon[M_r,N]\tto[M_r,N']$ is again surjective,
  showing that $FM_r=0$.\looseness-1

  To show that each ascending filtration of each object $F$ becomes stationary,
  we note that for each subobject $F'\subset F$, either $F'=F$ holds or we have
  a strict inequality $\sum_{r\ge 0} \dim(F'M_r)<\sum_{r\ge 0} \dim(FM_r)$ of
  non-negative integers.
\end{proof}

\begin{cor}\label{cor:connective}
  The spectra $K^\oplus(\NF)=K^\oplus(\Mod_\bF^\nil)$ and
  $K^\oplus(\Mod_\bF^\aut)$ are connective.
\end{cor}
\begin{proof}
  Since $\NF^\#$ is noetherian, the spectrum $\QK(\NF^\#)$ is connective by
  \cref{rmd:noeth}. Using \cref{lem:split} (or just the fibre sequence from
  \cref{rmd:Sherman}), it follows that also $K^\oplus(\NF)$ is connective. For
  $K^\oplus(\Mod_\bF^\aut)$, we now employ \cref{thm:A}.\looseness-1
\end{proof}

In order to finally determine the homotopy type of $K(\NF^\#)$, we use the
following consequence of dévissage, as in \cite[Cor.\,1]{Quillen}:

\begin{rmd}\label{rmd:schur}
  Let $\bfA$ be an abelian category. We call an object $S\in \bfA$ \emph{simple}
  if $0$ and $S$ are the only subobjects of $S$.  Assume that $\{S_i\}_{i\in I}$
  is a set of representatives for the isomorphism classes of simple objects of
  $\bfA$. If every object $A$ of $\bfA$ has \emph{finite length}, i.e.\ it
  admits a filtration $0=A^0\subset\dotsb\subset A^l= A$ such that each quotient
  $A^{k+1}/A^k$ is simple, then the dévissage theorem provides
  an equivalence of spectra\looseness-1
  \[\QK(\bfA) \simeq \bigoplus_{i\in I}\QK(\Mod_{\on{End}(S_i)^\op}),\]%
  where $\on{End}(S_i)$ is the ring of endomorphisms of $S_i$ (which, by Schur’s
  lemma, is actually a skew field).
\end{rmd}

It is hence our aim to classify the simple objects of $\NF^\#$ and to understand
their endomorphism rings. We start with a general observation.

\begin{rmk}\label{rmk:lift}
  Each additive functor $F\colon (\NF)^\op\to \mathbf{Ab}$ has a
  canonical lift to an $\bF[t]$-linear functor
  $\tilde F\colon(\NF)^\op\kern-.5px\to\kern-.5px
  \Mod_{\smash{\bF[t]}}$ along the forgetful map
  $\Mod_{\smash{\bF[t]}}\kern-1px\to\kern-1px\mathbf{Ab}$ as follows: Given $M$
  in $\NF$, we define an $\bF[t]$-module structure on $FM$ by
  defining the scalar multiplication by $q\in\bF[t]$ to be the map
  $F(M\to M, x\mapsto qx)$. Similarly, a natural transformation
  $\lambda\colon F\to F'$ between additive functors $F$ and $F'$ gives a natural
  transformation $\tilde\lambda\colon \tilde F\to \tilde F'$ between the
  corresponding lifts $\tF$ and $\tF'$.\looseness-1

  By abuse of notation, we will henceforth regard $\NF^\#$ as a subcategory of the
  category of $\bF[t]$-linear functors $\NF^\op\to \Mod_{\smash{\bF[t]}}$.
\end{rmk}

\begin{defi}
  For $r\ge 1$ we define $F_r$ to be the cokernel of $[-,\beta_r]$, where
  \[\beta_r\colon M_{r-1}\oplus M_{r+1}\tto M_r,\quad\quad
    (x,y)\mapsto tx+y,\]%
\end{defi}

\begin{lem}\label{lem:FrMin}
  For all $r,s\ge 1$, we have isomorphisms of $\bF[t]$-modules
  \[F_rM_s\cong \begin{cases}M_1 & \text{for $r=s$,}\\0 &
      \text{else.}\end{cases}\]%
\end{lem}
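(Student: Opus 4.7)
The plan is to unwind the definition of $F_r$ as the cokernel of $[-,\beta_r]$, evaluate it at each $M_s$, and compute explicitly using the normal forms for morphisms between Jordan blocks that were recalled at the start of the section. For each pair of integers $s,k\ge 1$, the set $[M_s,M_k]$ consists of maps $\phi^{k,s}_p$ with $p\in\bF[t]$ taken modulo $t^k$ and subject to the divisibility $t^{k-s}\mid p$ whenever $s<k$; this identifies $[M_s,M_k]$ with $M_{\min(s,k)}$ as an $\bF[t]$-module, and the $\bF[t]$-action recovered via the canonical lift of \cref{rmk:lift}.

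Under these identifications, the induced map $\beta_r^{\ast}\colon [M_s,M_{r-1}]\oplus [M_s,M_{r+1}]\to [M_s,M_r]$ sends a pair of polynomials $(p,q)$, representing $M_s\to M_{r-1}$ and $M_s\to M_{r+1}$, to $tp+q\pmod{t^r}$, representing $M_s\to M_r$. I would then split into the three cases $s<r$, $s>r$, and $s=r$.

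In both ``off-diagonal'' cases I expect the map to be surjective, so that the cokernel vanishes. For $s<r$, both source factors carry divisibility constraints; after substituting $p=t^{r-1-s}p'$ and $q=t^{r+1-s}q'$, the induced map becomes $(p',q')\mapsto p'+tq'$ from $M_s\oplus M_s$ to $M_s$, which is clearly surjective (set $q'=0$). For $s>r$, no divisibility constraints appear and the target value can be obtained by choosing $p=0$ and $q$ to be an arbitrary lift, which again yields surjectivity at once.

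The only remaining case is $s=r$. Here $[M_r,M_{r-1}]\cong M_{r-1}$ with no constraint on $p$; $[M_r,M_{r+1}]\cong M_r$ via the substitution $q=tq'$ with $q'\in M_r$; and $[M_r,M_r]\cong M_r$. The map becomes $(p,q')\mapsto t(p+q')\pmod{t^r}$, whose image is the submodule $tM_r\subset M_r$. The cokernel is therefore $M_r/tM_r\cong M_1$, as claimed. The main (but modest) obstacle is the careful bookkeeping of the divisibility constraints when unpacking the Hom-modules; once the correct normal forms are chosen, each case reduces to a short computation in $\bF[t]$.
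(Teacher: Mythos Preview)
Your proof is correct and follows essentially the same approach as the paper. The paper's version is slightly more streamlined, observing directly that for $s<r$ the component $[-,\phi^{r,r-1}_t]$ alone is surjective onto $[M_s,M_r]$ (and $[-,\phi^{r,r+1}_1]$ for $s>r$), whereas you reach the same conclusions after writing out the full map in explicit polynomial coordinates; the $s=r$ case is handled identically in both.
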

\begin{proof}
  We note that $\beta_r|_{M_{r-1}} = \phi^{r,r-1}_t$ and
  $\beta_r|_{M_{r+1}} = \phi^{r,r+1}_1$, and moreover:
  \begin{itemize}[itemsep=0em]
  \item if $s<r$, then $[-,\phi^{r,r-1}_t]\colon [M_s,M_{r-1}]\to [M_s,M_r]$ is
    surjective,
  \item if $s>r$, then $[-,\phi^{r,r+1}_1]\colon [M_s,M_{r+1}]\to [M_s,M_r]$ is
    surjective.
  \end{itemize}
  Finally, the evaluation $\on{ev}\colon [M_r,M_r]\to M_r$ with
  $\on{ev}(\phi)=\phi(1)$ is an isomorphism, and we have
  $\on{ev}(\on{im}([M_r,\beta_r])) = tM_r$, and hence
  $M_r/tM_r\cong M_1$.\looseness-1
\end{proof}

\begin{lem}\label{lem:Fi}
  For each $F\ne 0$ in $\NF^\#$, there is an $r\ge 1$ and a monomorphism
  $F_r\hookrightarrow F$.
\end{lem}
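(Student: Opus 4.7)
The plan is to exhibit a nonzero subfunctor $H\subseteq F$ lying in $\bfF'$: by \cref{defi:Fprime}, any such $H$ decomposes as $\bigoplus_{r\ge 1}F_r^{\oplus\nu_r}$ with at least one $\nu_r>0$, and the inclusion of the corresponding summand provides the desired monomorphism $F_r\hookrightarrow H\hookrightarrow F$.

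Writing $u_s\coloneqq F\phi^{s,s+1}_1$ and $d_s\coloneqq F\phi^{s,s-1}_t$ for brevity, I define $H$ pointwise by $H(M_s)\coloneqq \ker u_s\cap \ker d_s$. It is straightforward to verify that $H$ extends to a subfunctor: closure under $u_s$ and $d_s$ is built into the definition, and since every morphism between Jordan blocks is an $\bF$-linear combination of compositions of $\phi^{s,s+1}_1$ and $\phi^{s+1,s}_t$, additivity of $F$ takes care of the rest. As $\bfF$ is closed under subobjects by \cite{Auslander}, $H$ lies in $\bfF$, and the vanishing of $u_s|_H$ and $d_s|_H$ gives $H\in \bfF'$ via \cref{lem:F'characterisation}.

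The crux — and the main obstacle — is to show $H\ne 0$. Let $r$ be maximal with $FM_r\ne 0$, which exists by \cref{lem:finite}. Then $u_r=0$, and since $\phi^{s,s+1}_1\circ \phi^{s+1,s}_t$ equals multiplication by $t$ on $M_s$, contravariance of $F$ yields the identity $d_{s+1}u_s=t\cdot$ on $FM_s$; in particular $t$ acts as zero on $FM_r$. Assume for contradiction that $H=0$, pick $v_0\in FM_r\setminus\{0\}$, and inductively set $v_{i+1}\coloneqq d_{r-i}(v_i)\in FM_{r-i-1}$ for $i=0,\dotsc,r-1$. Two properties are maintained inductively: $tv_i=0$, since each $d_s$ is $\bF[t]$-linear by \cref{rmk:lift}; and $v_i\in\ker u_{r-i}$, via the identity $u_{s-1}d_s=t\cdot$ applied to $v_{i-1}\in FM_{r-i+1}$ (with base case $u_r(v_0)=0$). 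Moreover, the hypothesis $H(M_{r-i})=0$ forces $v_{i+1}\ne 0$ whenever $v_i\ne 0$, since otherwise $v_i$ would lie in $\ker u_{r-i}\cap \ker d_{r-i}=0$. The descent thus produces a nonzero $v_{r-1}\in FM_1$ with $u_1(v_{r-1})=0$; but $d_1(v_{r-1})\in FM_0=0$ then places $v_{r-1}$ in $H(M_1)=0$, a contradiction.
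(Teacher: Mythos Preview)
Your proof is correct and takes a somewhat different route from the paper's. The paper chooses $r$ to be the \emph{minimal} index for which $u_r=F\phi^{r,r+1}_1$ fails to be injective, picks a nonzero $v\in\ker u_r$ with $tv=0$, and then uses the injectivity of $u_{r-1}$ to verify directly that $d_r(v)=0$; the monomorphism $\lambda_v\colon F_r\to F$ is then built via Yoneda exactly as in the proof of \cref{lem:F'characterisation}. You instead start from the \emph{maximal} $r$ with $FM_r\neq0$ and descend via the maps $d_s$, packaging the target element inside the ``socle-like'' subfunctor $H$. Both arguments ultimately locate a nonzero element of $\ker u_s\cap\ker d_s$ for some $s$; the paper's choice of minimal $r$ lets it finish in one step without the inductive descent, while your formulation has the merit of identifying the largest $\bfF'$-subobject of $F$ in one go.

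One small imprecision: you invoke \cite{Auslander} for ``$\bfF$ is closed under subobjects'', but what the paper records from Auslander's Prop.\,2.1 is only closure under biproducts, kernels, and cokernels in the ambient functor category, which does not immediately give closure under arbitrary subfunctors. This does not damage your argument, however: the \emph{proof} of \cref{lem:F'characterisation} nowhere uses that the functor under consideration already lies in $\bfF$---it only uses additivity, the finiteness conditions of \cref{lem:finite}, and the vanishing of the structure maps. Since your $H$ inherits all of these as a subfunctor of $F$, the conclusion $H\in\bfF'$ (and hence the desired embedding $F_r\hookrightarrow H\hookrightarrow F$ between objects of $\bfF$) goes through unchanged.
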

\begin{proof}
  Let $r\ge1$ be minimal such that $F\phi^{r,r+1}_1\colon FM_r\to FM_{r+1}$ is
  not injective: such an $r$ exists because $F\ne0$ and
  because of \cref{lem:finite}. Moreover, we pick an element
  $v\in \ker(F\phi_1^{r,r+1})\subseteq FM_r$ with $v\ne 0$. We can additionally
  achieve that $t\cdot v=0$, by repeatedly replacing $v$ by $t\cdot v$ as long
  as this condition is not satisfied.\looseness-1

  The Yoneda lemma gives us a natural transformation
  $Y(v)\colon [-,M_r]\to F$ that sends $\Id_{\smash{M_r}}\in[M_r,M_r]$ to $v$.
  The composition $Y(v)\circ[-,\phi^{r,r+1}_1]\colon [-,M_{r+1}]\to F$ is the
  map $Y((F\phi^{r,r+1}_1)(v))$, which is trivial since $v$ lies in the kernel
  of $F\phi^{r,r+1}_1$. In a similar way, the composition
  $Y(v)\circ[-,\phi^{r,r-1}_t]\colon [-,M_{r-1}]\to F$ is the map
  $Y((F\phi^{r,r-1}_t)(v))$; and again we have $(F\phi^{r,r-1}_t)(v)=0$, as can
  be checked by composing with the injective map $F\phi^{r-1,r}_1$ to get
  $F(\phi^{r-1,r}_1\circ \phi^{r,r-1}_t)(v)=F(t\cdot \on{id}_{M_r})(v)=t\cdot v=
  0$.

  Combining the two previous observations, we obtain that $Y(v)\circ[-,\beta_r]$
  vanishes, whence we get an induced map $\lambda_v\colon F_r\to F$. This
  natural transformation is injective when evaluated on $M_r$, as it gives the
  map $M_1\to FM_r$ sending the standard generator to $v$; it is also trivially
  injective when evaluated on the other objects $M_s$, and by additivity it is
  injective on every object, i.e.\ it is a monomorphism in $\NF^\#$.
\end{proof}

\begin{cor}\label{cor:simple}
  An object $F\in\NF^\#$ is simple if and only if there is an $r\ge 1$ with
  $F\cong F_r$.
\end{cor}
\begin{proof}
  The ‘if’-part is implied by \cref{lem:FrMin} and the fact that $M_1$ has no
  non-trivial submodules, while the ‘only if’-part is implied by
  \cref{lem:Fi}.
\end{proof}

Now we have everything together to prove \cref{thm:B}:

\begin{proof}[Proof of \cref{thm:B}]
  The combination of \cref{lem:Fi} and \cref{lem:finite} shows that each $F$ in
  $\NF^\#$ has finite length. Moreover, by
  \cref{cor:simple}, $\{F_r\}_{r\ge 1}$ is a system of representatives for the
  isomorphism classes of simple objects of $\NF^\#$.  Finally, we note that
  endomorphisms $\eta:F_r\to F_r$ are uniquely determined by their component
  $\eta_{M_r}\colon FM_r\to FM_r$, which is an $\bF$-linear map $\bF\to
  \bF$. Thus, the endomorphism ring of each $F_r$ is isomorphic to
  $\bF=\bF^\op$. By applying \cref{rmd:schur}, we get\looseness-1
  \[\QK(\NF^\#)\simeq \bigoplus_{r\ge 1}\QK(\bF).\]
  Now \cref{thm:B} follows from \cref{lem:split}.
\end{proof}

\section{The topological case}
\label{sec:top}
The goal of this section is to prove \cref{thm:C}. We start by recalling the
Quillen plus-construction and the group-completion theorem in the language of
$\infty$-categories.

\begin{rmd}[\cite{Hoyois}, {\cite[Prop.\,\textsc{iii}.13]{HebestreitWagner}}, {\cite[Constr.\,3.2.18]{HilmanMcCandless}}]
  A discrete group $G$ is said to be \emph{hypoabelian} if for every non-trivial
  subgroup $H\subset G$, the abelianisation of $H$ is non-trivial. A space $X$
  is \emph{hypoabelian} if the fundamental group $\pi_1(X,*)$ is hypoabelian for
  every choice of basepoint $*\in X$.\looseness-1

  We have a full subcategory $\caS_{\smash\hypo}\subset\caS$ spanned by
  hypoabelian spaces. Moreover, the inclusion functor
  $\imath\colon\caS_{\smash\hypo}\hookrightarrow\caS$ admits a left-adjoint
  functor $(-)^+\colon\caS\to\caS_{\smash\hypo}$, called the \emph{Quillen
    plus-construction}.  If $X\in\caS$ is a hypoabelian space, then the unit of
  the adjunction $X\to \imath(X^+)$ is an equivalence.
\end{rmd}

\begin{rmd}
  The $\infty$-category $\CMon(\caS)$ of $E_\infty$-algebras in spaces contains
  a full subcategory $\CMon(\caS)^\gl$ that is spanned by group-like objects,
  i.e.\ objects $A$ such that the commutative monoid $\pi_0(A)$ is in fact an
  abelian group. The inclusion of $\CMon(\caS)^\gl$ into $\CMon(\caS)$ has a
  left-adjoint, the \emph{group-completion} functor, which agrees with the
  composite $\Omega^\infty \B^\infty$. For each object $A$ in $\CMon(\caS)$, the
  underlying space of $\Omega^\infty \B^\infty A$ has abelian fundamental group
  for each choice of basepoint, and is in particular hypoabelian.
 
  For any $A$ in $\CMon(\caS)$ and any $x\in A$, we define the \emph{mapping telescope}
  $\Tel_xA$ to be the colimit in $\caS$ of the diagram
  $\smash{(A\oset[2px]{-\cdot x\;}{\longrightarrow} A\oset[2px]{-\cdot
      x\;}{\longrightarrow} A\oset[2px]{-\cdot x\;}{\longrightarrow}
    \dotsb)}$. We have a canonical map of spaces
  $\Tel_xA\to\Omega^\infty\B^\infty A$, and since the target is hypoabelian,
  this induces a canonical map of hypoabelian spaces
  $(\Tel_xA)^+\to\Omega^\infty\B^\infty A$.
 
  We say that $x\in A$ is a \emph{propagator} if, for every $y\in A$, there
  exists $z\in A$ and $k\ge0$ such that $x^k$ and $yz$ are in the same
  path-component of $A$. The group-completion theorem
  \cite{SegalMcDuff,RW,Nikolausgroupcompletion} asserts that if $x$ is a
  propagator for $A$, then the canonical map
  $(\Tel_xA)^+\to\Omega^\infty\B^\infty A$ is an equivalence of spaces.
\end{rmd}

\begin{proof}[Proof of \cref{thm:C}]
  In the first part of the proof, we use the symbols “$\mO_\C$” and “$\mO_\R$”
  as aliases of “$\mathrm{U}$” and “$\mO$”, respectively, and we let $\bF$ be
  $\C$ or $\R$. We let $X\in\caS^\omega$ be a compact anima, e.g.\ the
  underlying homotopy type of a finite cell complex.

  We note that $A_\bF\coloneqq\Mod^{\simeq}_{\bF,\top}$ is equivalent, as a
  space, to the disjoint union of classifying spaces
  $\coprod_{d\ge 0}\B\mO_\bF(d)$. Let $x\in A_\bF$ be a point in $\B\mO_\bF(1)$
  and let $c_x\colon X\to A_\bF$ be the constant map with value $x$. Then we
  have a commutative diagram of
  spaces\looseness-1
  \[
    \begin{tikzcd}
      \Tel_{c_x}\Map(X,A_\bF)\ar[r]\ar[d]& (\Tel_{c_x}\Map(X,A_\bF))^+\ar[d]\ar[r]& \Omega^\infty\B^\infty\Map(X,A_\bF)\ar[d]\\
      \Map(X,\Tel_xA_\bF)\ar[r]&\Map(X,(\Tel_xA_\bF)^+)\ar[r] & \Map(X,\Omega^\infty\B^\infty A_\bF).
    \end{tikzcd}
  \]
  Our goal is to prove that the right vertical map is an equivalence of spaces,
  and we will do so by showing that every horizontal map and the left vertical
  map are equivalences.  For the left vertical map, we use that $X$ is a compact space, whence $\Map(X,-)$ commutes with filtered colimits.
  
  For the rightmost horizontal maps, we invoke the group-completion theorem,
  using that $x$ and $c_x$ are propagators in $A_\bF$ and in
  $\Map(X,A_\bF)$, respectively; to justify the latter, note that
  a point $y\in \Map(X,A_\bF)$, i.e. a map
  $y\colon X\to A_\bF\simeq\coprod_{d\ge 0}\B\mO_\bF(d)$, classifies a
  $\bF$-vector bundle over $X$ of some rank $d$; if we let
  $z\colon X\to \B\mO_\bF(d')$ be a map classifying a complementary vector
  bundle over $X$ (i.e. the direct sum of the two vector bundles is a trivial
  vector bundle of rank $d+d'$), then we have $yz\simeq c_x^{d+d'}$.

  For the left horizontal maps, we use that $\Tel_xA_\bF$ and
  $\Tel_{c_x}\Map(X,A_\bF)$ are already hypoabelian, which we see as follows:
  First, $\Tel_xA_\bF$ is equivalent to $\Z\times\B\mO_\bF$, and this space has an
  abelian fundamental group for each choice of basepoint; and second, we see
  $\Tel_{c_x}\Map(X,A_\bF)\simeq\Map(X,\Tel_xA_\bF)\simeq\lim_{\smash{x\in X}}\Tel_xA_\bF$,
  and the full subcategory $\caS_{\smash\hypo}\subset\caS$ is closed under limits. 
  
  This altogether shows that the right vertical map is an equivalence of spaces,
  and since it additionally is a morphism of $E_\infty$-algebras, this concludes
  the proof of the first statement of \cref{thm:C}.\looseness-1
  
  For the second statements about the case of $X=S^1$, let us denote by
  $E\ula{k}$ the $k$-connective cover of a given spectrum $E$. Then we use Bott
  periodicity for complex $K$-theory to obtain and equivalence of spectra
  \[\Map(S^1,\Z\times \BU)\ula{0}\simeq%
    \mathrm{ku}\oplus(\Sigma^{-1}\mathrm{ku})\ula{0}\simeq%
    \mathrm{ku}\oplus\Sigma\mathrm{ku}\simeq %
    \Sigma_+^\infty S^1\wedge \mathrm{ku},\]%
  and we use the Wood cofibre sequence for the ‘realification’ \cite{Wolbert},
  which can be written as
  $\smash{\mathrm{ko}\ula{1}\to
    \Sigma^2\mathrm{ku}\oset[3px]{\Sigma{}^2r}{\longrightarrow}
    \Sigma^2\mathrm{ko}}$, implying that
  $(\Sigma^{-1}\mathrm{ko})\ula{0}\simeq \Sigma^{-1}(\mathrm{ko}\ula{1})$ is the
  cofibre of $r$, to obtain an equivalence of spectra\looseness-1
  \begin{align*}
    \Map(S^1,\Z\times \BO)\ula{0}&\simeq \mathrm{ko}\oplus(\Sigma^{-1}\mathrm{ko})\ula{0}\\
                                 &\simeq\mathrm{ko}\oplus \mathrm{cofib}(r)\\
                                 &\simeq\mathrm{colim}\mathopen{}\big(\mathrm{ko}\overset{r}{\longleftarrow}\mathrm{ku}\overset{r}{\longrightarrow}\mathrm{ko}\big)\mathclose{}.\qedhere
  \end{align*}
\end{proof}

\printbibliography[heading=bibintoc]

\addr{Andrea Bianchi}{%
  Max Planck Institute for Mathematics,\\
  Vivatsgasse 7,
  53111 Bonn,
  Germany,\\
  \mail{bianchi@mpim-bonn.mpg.de}.}

\addr{Florian Kranhold}{%
  Karlsruhe Institute of Technology,\\
  Englerstraße 2,
  76131 Karlsruhe,
  Germany,\\
  \mail{kranhold@kit.edu}.}

\end{document}